\theoremstyle{plain}
\newtheorem{theorem}{Theorem}[section]
\newtheorem{lemma}[theorem]{Lemma}
\newtheorem{proposition}[theorem]{Proposition}
\newtheorem{corollary}[theorem]{Corollary}
\newtheorem*{theorem A}{Theorem A}
\newtheorem*{theorem C}{Theorem C}
\newtheorem*{theorem B}{Theorem B}
\newtheorem*{theorem D}{Theorem D}
\theoremstyle{definition}
\newtheorem{remark}[theorem]{Remark}
\newtheorem{definition}[theorem]{Definition}
\numberwithin{equation}{section} \setcounter{tocdepth}{1}
\newcounter{hypocounter}
\renewcommand\thehypocounter{(H\arabic{hypocounter})}
\begin{document}
\title[locally conformal expanding Actions ]{ locally conformal expanding Actions: Markov Partition and Thermodynamic of the induced skew product}
\author[Ehsani]{Azam Ehsani}
\address{\centerline{Department of Mathematics, Ferdowsi University of Mashhad, }
\centerline{Mashhad, Iran.}  }
\email{aza\_ehsani@yahoo.com}
\author[Fakhari]{Abbas Fakhari}
\address{\centerline{Department of Mathematics, Shahid
Beheshti University,}
\centerline{Tehran, Iran}  }
\email{a\_fakhari@sbu.ac.ir}
\author[Ghane]{Fatemeh Helen Ghane$^{*}$}
\address{\centerline{Department of Mathematics, Ferdowsi University of Mashhad, }
\centerline{Mashhad, Iran.}  }
\email{ghane@math.um.ac.ir}
\thanks{$^*$Corresponding author}
\author[Nazarian]{Javad Nazarian Sarkooh}%\thanks{All authors have contributed equally}
\address{\centerline{Department of Mathematics, Ferdowsi University of Mashhad, }
\centerline{Mashhad, Iran.}  }
\email{javad.nazariansarkooh@gmail.com}
 \keywords{locally conformal expanding; inducing scheme; countable Markov partition; liftable measure; Gibbs measure; equilibrium state.} \subjclass[2010]
{37C85; 37D35; 37B10; 37B05; 37A25; 37A30; 37A50; 37C40.}
\begin{abstract}
For topologically mixing locally conformal semigroup actions generated by a finite collection of $C^{1+\alpha}$ conformal local diffeomorphisms, we provide a countable Markov partition satisfying the finite images and the finite
cycle properties. We show that they admit inducing schemes and describe the tower constructions
associated with them. An important feature of these towers is that their induced maps are equivalent to a subshift of
countable type. Through the investigating the ergodic properties of induced map,
we prove the existence of  liftable measures and establish a thermodynamic formalism of the induced skew product with respect to them.
\end{abstract}
\maketitle
\begingroup
\hypersetup{hidelinks}
\setcounter{tocdepth}{3}
\tableofcontents
\endgroup
\section{Introduction}
The inducing schemes and the corresponding tower constructions are classical objects in ergodic
theory and were considered in works of Kakutani \cite{K}, Rokhlin \cite{Ro}, Pesin and Senti \cite{PS} and others.
In this article, we mainly describe an abstract inducing scheme for a locally conformal expanding semigroup action generated by a finite collection of $C^{1+\alpha}$ (local) diffeomorphisms on a compact Riemannian manifold which parameterized by random
walks. This scheme
provides a symbolic representation over $(W,F,\tau)$, where $F$ is the induced map acting on the inducing
domain $W$ and $\tau$ is the inducing time. This created symbolic dynamic allowing us to apply standard techniques from symbolic dynamics as well. This means that these actions can be partially described, in a symbolic way, as a subshift on a countable set of states. In particular, some basic facts about
the original action can be achieved through the identifying the induced  symbolic maps.
This is a standard way of understanding partial facts about the initial dynamic. For instance, in the uniformly hyperbolic case, Bowen in \cite{B3} used a Markov partition as a tool to present a fairly complete description of ergodic properties of the dynamic.In \cite{BS}, Bowen and Series described Markov partitions related to the
action of certain groups of hyperbolic isometries on the boundary space.
%In this very special case of Fuchsian groups, not only a Markov map associated to
%the group was constructed, but also the induced map and the group were orbit equivalent.
After Bowen, the scenario has been developed to the context of non-uniformly hyperbolic systems (see for instance \cite{BK, MS, O, PS}).
In the non-uniformly hyperbolic setting, we cannot expect to have finite Markov partitions.
However, in many cases it is possible to use the symbolic approach by finding
a countable Markov partition,  or a more general inducing scheme \cite{PS,Y}. This strategy applied by Hofbauer in \cite{Ho}. He used a
countable-state Markov model to study equilibrium states for piecewise monotonic interval maps.
After that Buzzi \cite{Bu} generalized the connected Markov diagrams for nonlinear multi-dimensional dynamical systems by using the Hofbauer's method.
The connected Markov diagrams composed of pairwise disjoint connected subsets which satisfy the Markov property and they
can be used to construct ergodic invariant measures with maximal entropy \cite{Bu}
and equilibrium states for piecewise expanding maps \cite{BS1}.
Yuri introduced in \cite{Y} countable to one Markov systems with finite
range structure which gives a nice countable states symbolic dynamics.
For countable to one transitive Markov systems, she established thermodynamic
formalism for non-H$\ddot{\text{o}}$lder potentials in non-hyperbolic situations. Inducing schemes also use the tower
approach to model the system by a countable-state Markov shift \cite{PS, PSZ2}.
Every inducing scheme generates a symbolic representation by a tower
which is well adapted to construct absolutely continuous measures and equilibrium measures for a certain
class of potential functions using the formalism of countable
state Markov shifts. The projection of these measures from the tower
are natural candidates for equilibrium measures for the original
system.
Apart from the advantage of returning to the initial dynamic,  studying the formalism of the induced map, as a typical example of countable Markov map, can also be interesting on its own.

Thermodynamic formalism provides some methods for constructing invariant measures with
strong statistical properties such as Gibbs measures, physical measures or SRB measures.
It was brought from statistical mechanics to dynamical systems
by the classical works of Ruelle \cite{R1, R2},
Sinai \cite{S2, S3}, and Bowen \cite{B1, B2, B3}.
Sarig in series of papers, \cite{Sa1,Sa2,Sa3,Sa4,Sa5}, has made many fundamental contributions to thermodynamics of countable Markov chains. He adapted transience, null recurrence, and positive
recurrence for non-zero potential functions. On the other hand, for continuous $\mathbb{Z}^d$ actions on compact spaces, Ruelle \cite{R2} obtained a variational principle for the topological
pressure to constructing equilibrium states as the class of pressure maximizing invariant probability measures.
One of the main challenges in this literatures is the extension of the variational principle for more general semigroup actions face some nontrivial challenges.
Carvalho, Rodrigues and Varandas \cite{RV} established a variational principle for finitely generated semigroup actions with finite topological entropy.
The topological aspects of the thermodynamical formalism
for finitely generated semigroup actions were described in \cite{RV}.
Thermodynamic formalism of semigroup actions were also investigated by \cite{Bi1, Bi2, Bi3, But, MW}.

One of the aim in this paper is to develop a thermodynamic formalism
for a certain class of locally conformal expanding semigroups.
However, a unified approach to the thermodynamic formalism for semigroup actions in the
absence of probability measures invariant under all elements of the semigroup. % is still incomplete.
Meanwhile, we show how the choice of the random walk in the semigroup settles the ergodic properties
of the action.
We construct a special countable Markov partition for such actions which is
achieved primarily through a finite cover and satisfies the finite cycle property.
We generalize the notion of an inducing scheme to this kind of semigroup actions for generating a symbolic representation.
Then, we use the symbolic approach to study equilibrium states of an appropriate class of real-valued potential functions.
We describe the thermodynamic formalism of the induced map through studying its Gibbs and equilibrium states. For maps with inducing schemes for some classes of potential functions, Pesin and Senti introduced some methods in \cite{PS} to prove the existence of unique equilibrium
measures within the class of all liftable measures. In \cite{PSZ2}, Pesin, Senti and Zhang solved the liftability problem for towers of hyperbolic type
and developed a method to study the liftability problem for this kind of inducing schemes.
Here, using the tower construction of locally conformal expanding semigroup actions, we define an inducing scheme for skew products induced by the semigroup actions. We introduce liftable measures and
describe those measures that give positive weight to the base of the tower and can be lifted. Then, using
 the symbolic representation by the tower, we establish a thermodynamic formalism for liftable measures.
\section{Setting and main results}\label{section2}
In this section, we describe precisely the notions in this paper and our main results.
Throughout the paper $M$ is a compact connected smooth Riemannian manifold and $\lambda$ denotes the normalized
Lebesgue measure.
\subsection{Markov partition for Locally conformal expanding semigroup actions}% and their induced skew products}
Given a finite set of continuous maps $f_i : M \to M$, $i=1, 2, \ldots , k$,
$k > 1$, and the finitely generated semigroup $\Gamma$ (under function composition) with the finite set of
generators $\Pi_1 = \{id, f_1, \ldots, f_k\}$, we will write
\begin{equation*}
  \Gamma:=\bigcup_{n \in \mathbb{N}}\Pi_n,
\end{equation*}
where $\Pi_0=\{id\}$ and $f \in \Pi_n$ if and only if $f=f_{i_n}\circ \cdots \circ f_{i_1}$, with $f_{i_j}\in \Pi_1$.
We will assume that the generator set $\Pi_1$ is minimal, meaning
that no function $f_j$ , for $j = 1, \ldots, k$, can be expressed as a composition from the remaining
generators.
Observe that each element $f$ of $\Pi_n$ may be seen as a word that originates
from the concatenation of $n$ elements in $\Pi^\ast=\{f_i: i=1, 2, \ldots , k\}$.
If $f$ is the concatenation of $n$ elements of $\Pi^\ast$, we will write $|f|:=n$.

%%%%%%%%%%%%%%%%%%%%%%
%Here, we assume that the generating set $f_i$, $i=1, \ldots, k$, are $C^{1+\alpha}$ local diffeomorphisms on $M$.
%Notice that a map $f:M \to M$ is a \emph{local diffeomorphism}, if for each point $x$ in $M$, there exists an open set $U$ containing $x$, such that
%$f(U)$ is open in $M$ and $f|_U:U \to f(U)$ is a diffeomorphism.

Given an open set $U \subset M$, a differentiable local diffeomorphism $f$ is a \emph{$U$-conformal} map if there exists a function
$\alpha :U \to \mathbb{R}$ such that for all $x \in U$ we have
that $Df(x) = \alpha(x) \textnormal{Isom}(x)$, where
$\textnormal{Isom}(x)$ denotes an isometry of $T_xU$. In this case,
$\alpha(x) = \|Df(x)\| $. %= m(Df(x))$, for all $x \in U$,
%where $m(A)= \|A^{-1}\|^{-1}$ denotes the co-norm of a linear transformation $A$.
Clearly, any local diffeomorphism in dimension one is a conformal map on the whole space.
\begin{definition}[Locally conformal semigroup]\label{expand}
We say that the semigroup $\Gamma$ is a locally conformal
expanding semigroup if it satisfies the following conditions:
\begin{enumerate}
\item [(\textbf{C1})]For the generators $ f_{1},\ldots,f_{k}$, there exist a finite covering of open balls
$ V_{1},\ldots,V_{k} \subset M$ and constants $\sigma<1$ and $r>0$ such that
%$ M=V_{1}\cup\ldots\cup V_{k} $,
the restriction $f_{i}|_{B_r(V_i)}$ is a diffeomorphism onto its image and
\begin{equation*}\label{b}
||Df_{i}(x)^{-1}||<\sigma\ \  for\ every \ \   x\in B_r(V_{i}) \ \ and \ \  i=1,\ldots,k,
\end{equation*}
where $B_r(V_{i})$ stands for the $r$-neighborhood of the set $V_i$.
\item [(\textbf{C2})] The generators $f_{i}$, $i=1,\ldots,k$, are $B_r(V_i)$-conformal maps.
\end{enumerate}
\end{definition}
%\begin{example}
%\textnormal{ُُ
 \begin{figure}[htbp]
   \centerline{\includegraphics[scale=.2]{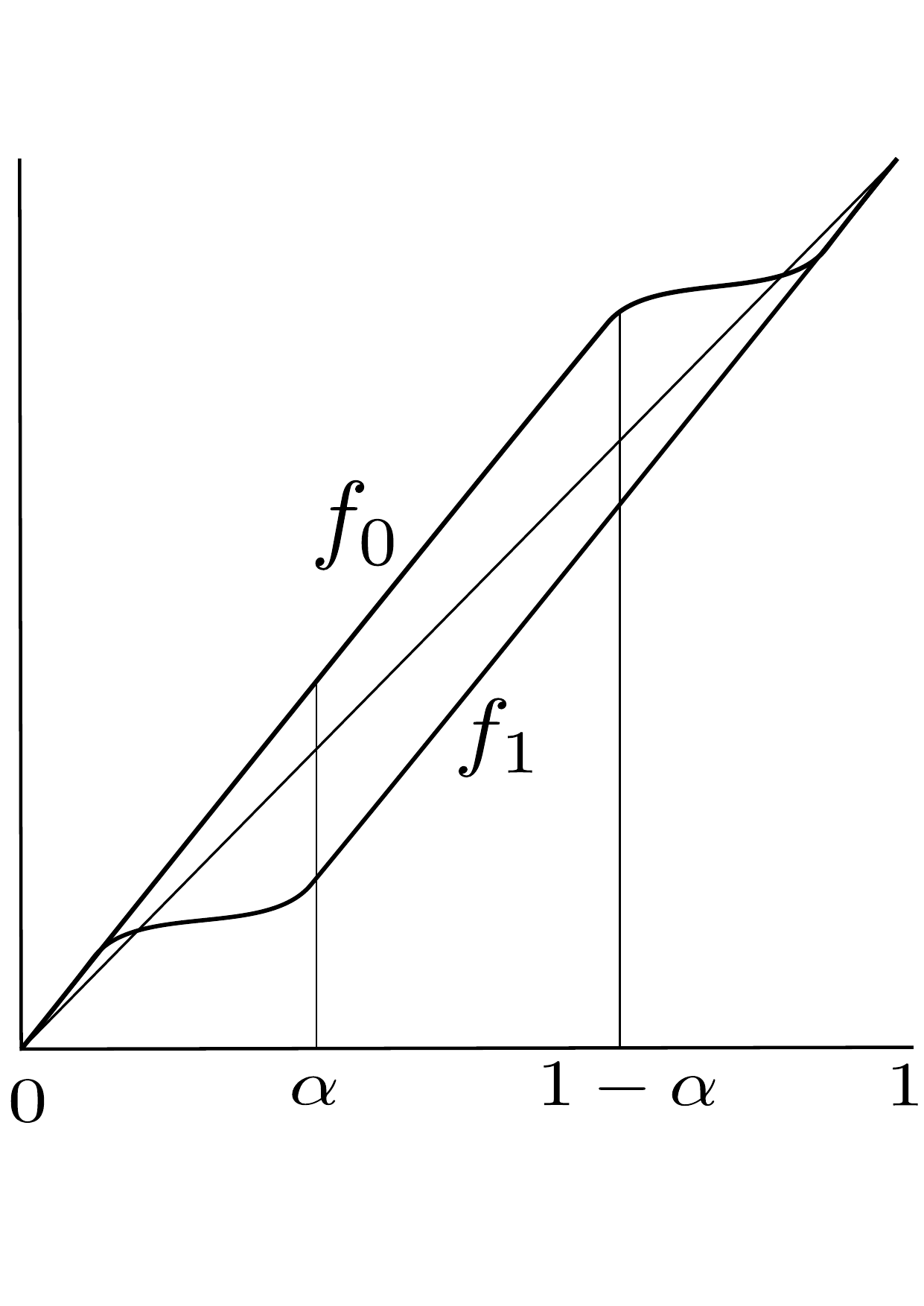}}
%
 %  \vspace{-1cm}
   \caption{Graph of $f_0$ and $f_1$}
   \label{Conformal Expanding}
   \end{figure}
As an example, suppose that $f_0$ and $f_1$ are two smooth diffeomorphisms on the unit circle $\mathbb{S}^1$ satisfying the following: (see Figure \ref{Conformal Expanding}).
%\begin{itemize}
%\item
both fix $0$,
%\item
$f_0(x)<x<f_1(x)$, for $x\in (0,1)$ and
%\item
for some $\alpha<1/2$, the derivatives
$f_0^\prime$ and $f_1^\prime$ are constant and expanding on $[0,1-\alpha]$ and $[\alpha,1]$ respectively,
%\end{itemize}
Define four smooth diffeomorphisms $F_i$, $i=1,\ldots ,4$ on two dimensional torus
$\mathbb{T}^2$ by $F_1(x,y):=(f_0(x),f_0(y))$, $F_2(x,y):=(f_0(x),f_1(y))$, $F_3(x,y):=(f_1(x),f_0(y))$ and $F_4(x,y):=(f_1(x),f_1(y))$. Then, semigroup generated by $F_i$'s is locally conformal expanding.

%\end{example}
%\subsection{Countable Markov partitions}
\begin{definition}
Let $\Gamma$ be a semigroup of local diffeomorphisms defined on a compact smooth manifold $M$.
 A countable collection $\{M_i : i \in \mathbb{N}\}$ composed of closed subsets of $M$ together with
a collection $\{h_i \in \Gamma : i \in \mathbb{N}\}$ is called a \emph{countable Markov
partition} for $\Gamma$ if the followings hold:
\begin{enumerate}
\item [$\bullet$]~$\lambda(M \setminus \cup_{i \in \mathbb{N}} M_i)=0, \ \textnormal{and} \ M_i=\overline{M_i^\circ}.$
\item [$\bullet$]~$M_i^\circ \cap M_j^\circ=\emptyset \ \textnormal{whenever} \ i\neq j.$
\item [$\bullet$]~$\textnormal{If}$~ $M_i^\circ \cap h_i^{-1}(M_j^\circ) \neq \emptyset, \ \textnormal{then} \ h_i(M_i) \supset M_j.$
\end{enumerate}
A countable Markov partition $(\mathcal{M},\mathcal{H})=(M_i,h_i)_{i\in \mathbb{N}}$ has
the \emph{finite images property (FIP)} if $\mathcal{B}:=\{h_i(M_i^\circ): i \in \mathbb{N}\}$ is a finite covering, has $N$ elements say, of $M$.
%  for some fixed $N \in \mathbb{N}$ and $\mathcal{B}$ is an open cover of $M$;
The countable Markov partition has also \emph{finite cycle property (FCP)} if
there exists $n_1, \ldots, n_N \in \mathbb{N}$ such that for all $\ell \in \mathbb{N}$, there exist $i,j \in \{1, \ldots, N\}$,  such  that $h_{n_i}(M_{n_i})\supset M_{\ell}$ and $h_{\ell}(M_\ell)\supset M_{n_j}$.
\end{definition}
The exsistence of a countable Markov partition with FIP and FCP provide a inducing scheme of hyperbolic
type for a locally conformal expanding actions (see Subsection \ref{hypSc}, below, for precise statements). First, we should provide a condition under which a locally conformal expaanding semigroup has a countable Markov partiotion. The following notion which is a natural extension of topologically mixing in determinstic case to the locally conformal expanding semigroups, is necessary for this letter.

For a locally conformal expanding semigroup $\Gamma$,
take the covering $\{V_1, \ldots , V_k\}$ of $M$ and $r$ as in Definition \ref{expand} and choose $\eta<r/2$ as the Lebesgue number of this covering.
A finite word $w=(i_1, \ldots, i_n)$ with $i_j \in \{1, \ldots, k\}$ is an \emph{admissible word for $x$} if $x \in V_{i_1}$ and $f_{i_j}\circ \cdots \circ f_{i_1}(x)\in V_{i_{j+1}}$, for each $j=1, \ldots, n-1$. For a given pair of points $x,y\in M$, we say that $y$ is \emph{accessible by} $x$, if there exists an admissible word $w=( i_1, \ldots, i_n)$ for $x$ such that $f_w^n(x)=y$, where $f_w^n:=f_{i_n}\circ \cdots \circ f_{i_1}$, denoting by $x \vDash_w y$. Given a ball $B$ of radius $\varepsilon$ and center $y$, with $\varepsilon < \eta$, if $y$ is accessible by a point $x\in M$, along an admissible word $w=( i_1, \ldots, i_n)$, then the subset $B_{i_1, \ldots, i_n}:=f_w^{-n}(B)$ is called an admissible $n$-cylinder around $x$, where $f_w^{-n}(B)=(f_w^n)^{-1}(B)$. The semigroup $\Gamma$ is called a \emph{topologically mixing} semigroup if for each two small open sets $U$ and $V$ of $M$ with $\text{diam}(U)$ and $\text{diam}(V)$ are less than $\eta/2$, there exist an admissible word $w=(i_1, \ldots, i_n)$ and an open ball $B$ with radius $\varepsilon$, such that $B \supset V$ and the $n$-cylinder $B_{i_1, \ldots, i_n}=f_w^{-n}(B)$ is contained in $U$; hence $f_w^n(B_{i_1, \ldots, i_n})\supset V$ which implies that $f_w^n(U)\supset V$.
\begin{theorem A}\label{Theorem A}
Every topologically mixing locally conformal expanding semigroup admits a
countable Markov partition with FIP and FCP.
\end{theorem A}
\subsection{Inducing scheme of hyperbolic type for semigroups}\label{hypSc}
Pesin et al. in \cite{PSZ2} introduced an inducing scheme of hyperbolic type for a continuous map of a compact topological space
and described the associated tower construction to establish a thermodynamic formalism.
Here, we generalize this notion to semigroup actions and describe inducing schemes of hyperbolic type for finitely generated semigroups of local diffeomorphisms. For this, we need some preliminaries. For a countable Markov partition $(\mathcal{M},\mathcal{H})$ define a Markov chain as follows: let $ \mathcal{A}=(t_{ij})_{\mathbb{N} \times \mathbb{N}} $ be a matrix defined by
\begin{equation}\label{h}
t_{ij}:= \left\{
 \begin{array}{rl}
  1 & \text{if}\ M_{i}^\circ\cap h_i^{-1}(M_{j}^\circ)\neq\emptyset\\
  0 & \text{otherwise}.
 \end{array}\right.
\end{equation}
It is obvious that $\mathcal{A}=(t_{ij})_{\mathbb{N} \times \mathbb{N}}$ is a matrix of zeroes and ones and, by Markovian property, it has no zero rows.
Thus, $\mathcal{A}$ is a transition matrix.
Take
\begin{equation}\label{chain3}
\Sigma_{\mathcal{A}}^{(+)}:=\{\textbf{i}=(i_0, i_1, \ldots)\in \mathbb{N}^{\mathbb{Z}(\mathbb{N}\cup \{0\})}: \forall j\geq 0, \ \ t_{i_{j}i_{j+1}}=1\}
\end{equation}
%and
%\begin{equation}\label{chain4}
%\Sigma_{\mathcal{A}}:=\{\textbf{i}=(\ldots, i_{-1}, i_0, i_1, \ldots)\in \mathbb{N}^{\mathbb{Z}}: \forall j\in \mathbb{Z}, \ \ t_{i_{j}i_{j+1}}=1\}
%\end{equation}
the corresponding \emph{one-sided} and \emph{two-sided topological Markov chains} on the countable alphabet $\mathbb{N}$ and $\sigma_{\mathcal{A}}^{(+)}:\Sigma_{\mathcal{A}}^{(+)}\longrightarrow\Sigma_{\mathcal{A}}^{(+)}$
%and $\sigma_{\mathcal{A}}:\Sigma_{\mathcal{A}}\longrightarrow\Sigma_{\mathcal{A}}$
the \emph{left shift maps}. %$(\sigma_{\mathcal{A}^+}\textbf{i})_{j}=i_{j+1}$ and $(\sigma_{\mathcal{A}}\textbf{i})_{j}=i_{j+1}$.
%for each $\textbf{i}=(i_0, i_1, \ldots) \in \Sigma_{\mathcal{A}}^{+}$.
Equip $ \Sigma_{\mathcal{A}}^{(+)}$% and $ \Sigma_{\mathcal{A}}$
with the topology generated by the base of cylinder sets
%\begin{center}
$C_{[a_{n},\ldots,a_{m}]}:=\{\textbf{i}\in\Sigma_{\mathcal{A}}^{(+)}:i_{j}=a_{j}$,
for $ j=n,\ldots,m\},$ where $n,m \in \mathbb{N}\cup \{0\}$ and $n\leq m$.
%\end{center}
%and
%\begin{center}
%$ C_{[a_{n},\ldots,a_{m}]}:=\{\textbf{i}\in\Sigma_{\mathcal{A}}:i_{j}=a_{j} $
%for $ j=n,\ldots,m\},$ where $n,m \in \mathbb{Z}$ and $n\leq m$.
%\end{center}
Now, suppose that $(\mathcal{M}, \mathcal{H})=\{(M_{i}, h_i)\}_{i\in\mathbb{N}}$ has FIP.
By the definition, there is a finite set $\mathcal{B}=\{B_1, \ldots, B_N\}$ of open sets such that for each $i \in \mathbb{N}$, $h_i(M_i^\circ)\in \mathcal{B}$, for each $j=1, \ldots, N $.
Put $\mathcal{M}^\circ:=\{M_i^\circ\}$. Let $|h_i|$ be the length of the admissible finite word $w_i$, $i \in\mathbb{N}$.
Take the subset $W := \bigcup_{M_i \in \mathcal{M}}M_i^\circ$ which is called the \emph{inducing domain}.
Define the \emph{inducing time} $\tau : M \to \mathbb{N}$ by
\begin{equation}\label{tau}
\tau(x):= \left\{
 \begin{array}{rl}
  |h_i| & \text{if}\ x \in M_{i}^\circ\\
  0 & \text{if} \ x \notin W,
 \end{array}\right.
\end{equation}
%Let $\tau : \mathcal{M} \to \mathbb{N}$ be a positive integer valued function.
That is $h_i=f_{w_i}^{\tau}$. We say that $\Gamma$ admits an \emph{inducing scheme of hyperbolic
type} $\{\mathcal{M},\tau\}$, with inducing domain $W $
and inducing time $\tau $
provided the two following conditions hold:
\begin{enumerate}
  \item [(\textbf{H1})] For any $M_i \in \mathcal{M}$, $h_i|_{M_i^\circ}$ can be extended to a homeomorphism of a neighborhood of $M_i$.
  \item [(\textbf{H2})] For every bi-infinite sequence  $\textbf{i}=(\ldots, i_{-1}, i_0, i_1, \ldots)\in\Sigma_{\mathcal{A}}$ there exists a unique
  sequence $\underline{x}=\underline{x}(\textbf{i})=(x_n)_{n \in \mathbb{Z}}$ such that
  \begin{enumerate}
    \item [(a)] $x_n \in M_{i_n}$ and $h_{i_n}( x_n)= x_{n+1}$,
    \item [(b)] if for some sequences $\underline{x}=\underline{x}(\textbf{i})$ and $\underline{y}=\underline{y}(\textbf{j})$,
    one has $x_n=y_n$ for all $n \leq 0$ then $(\ldots, i_{-1}, i_0)=(\ldots, j_{-1}, j_0)$.
  \end{enumerate}
\end{enumerate}
Define the \emph{induced map} $F :W \to M$ by
\begin{equation}\label{ind}
F(x):=h_i(x),~\text{for}~x \in M_i^\circ.
\end{equation}
For each $\textbf{i} \in \Sigma_{\mathcal{A}}$, consider the sequence $\underline{x}(\textbf{i})$ given by condition (\textbf{H2}) and
define the \emph{coding map} $\pi : \Sigma_{\mathcal{A}} \to M$ by
\begin{equation}\label{code}
\pi(\textbf{i}):=x_0.
\end{equation}
Let
\begin{equation}\label{hat}
 \check{\Sigma}:=\{\textbf{i}=(\ldots, i_{-1}, i_0, i_1, \ldots) \in \Sigma_{\mathcal{A}} : x_n( \textbf{i}) \in M_{i_n}^\circ, \ \text{for~all} \ n \in \mathbb{Z}\}.
\end{equation}
We additionally require that the two following conditions holds for inducing scheme $\{\mathcal{M},\tau\}$. In the rest, we will prove the existence and uniqueness of equilibrium measures using these two imposed conditions.
\begin{enumerate}
  \item [(\textbf{H3})]If $\mu$ is an ergodic $\sigma_{\mathcal{A}}$-invariant measure of full support then $\mu(\Sigma_{\mathcal{A}}\setminus  \check{\Sigma}) = 0$.
  \item [(\textbf{H4})] The induced map $F$ has at least one periodic point in $W$.
  \end{enumerate}
Using condition (\textbf{H3}), we show that every Gibbs measure is
supported on $\check{\Sigma}$ and its projection by the coding map $\pi$ is thus supported on $W$ and is $F$-invariant.
The projected measure is a natural candidate for the equilibrium measure for $F$. Condition
(\textbf{H4}) will be used to prove that the pressure function is finite (see Proposition \ref{p1}).

\begin{theorem B}\label{B}
Each locally conformal expanding topologically
mixing semigroup of $C^{1+\alpha}$ locally conformal local diffeomorphisms on a compact connected manifold $M$
exhibits an inducing scheme $(\mathcal{M},\tau)$ of hyperbolic type satisfying conditions \normalfont{\textbf{(H1)-(H4)}}.
\end{theorem B}

\subsection{Inducing scheme for the induced skew product}
%For the semigroup $\Gamma$ with the locally expanding property, the semigroup action may be understood either as a random walk.
%Random walks have their deterministic twins, namely, the step skew product dynamical systems over topological Markov chains.\\
%Let $\Gamma$ be a locally expanding semigroup.
We consider locally conformal semigroup actions and explore the relation between ergodic properties
 of semigroup actions and their induced skew products.
%the thermodynamic formalism of the associated skew products.
Locally conformal semigroups exhibit inducing schemes of hyperbolic type.
The tower construction of these semigroup actions allows us to define an inducing scheme for their induced skew products.
We introduce liftable measures and describe those measures that give positive weight to the base of the tower and can be lifted.
We use the symbolic representation by the tower and apply the results in Section \ref{section6}
to establish a thermodynamic formalism for liftable measures.

For a locally conformal expanding semigroup $\Gamma$, let $\{V_1, \ldots , V_k\}$ and $r$ be as in Definition \ref{expand}. Suppose that finite matrix $ A;=(a_{ij})_{i,j=1}^k $ is defined by
\begin{equation}\label{h1}
a_{ij}:= \left\{
 \begin{array}{rl}
  1 & \text{if}\ f_i(V_i)\cap V_j\neq \emptyset\\
  0 & \text{otherwise}.
 \end{array}\right.
\end{equation}
We may assume that $\max \{\text{diam}(V_i): i=1, \ldots, k\}<r$. Put $N_k:=\{1, \ldots, k\}$.
%Then, $A=(a_{ij})_{i,j=1}^k$ is a matrix of zeroes and ones.
Since $\{V_1, \ldots, V_k\}$ is a covering of $M$, $A$ has no zero rows saying that $A$ is actually a transition matrix.
In particular, if $\Gamma$ is topologically mixing then $A$ has no zero columns.
Similar to the case of $\mathcal{A}$, we define
\begin{equation}\label{chain}
\Sigma_{A}^{(+)}:=\{\textbf{j}=(j_0, j_1, \ldots)\in N_k^{\mathbb{Z}(\mathbb{N}\cup \{0\})},~a_{j_{\ell}j_{\ell+1}}=1\}
\end{equation}
%and
%\begin{equation}\label{chain1}
%\Sigma_{A}:=\{\textbf{j}=(\ldots, j_{-1}, j_0, j_1, \ldots): j_\ell \in N_k, \ \forall \ell \in \mathbb{Z}, \ \text{and} \ %a_{j_{\ell}j_{\ell+1}}=1\}
%\end{equation}
%\end{center}
the corresponding \emph{one-sided and two-sided finite topological Markov chains}, respectively, and $\sigma_{A}^+:\Sigma_{A}^+\longrightarrow\Sigma_{A}^+$
and $\sigma_{A}:\Sigma_{A}\longrightarrow\Sigma_{A}$
the \emph{left shift maps}.
We equip $ \Sigma_{A}^{+}$ and $ \Sigma_{A}$ with the
topology generated by the base of cylinder sets
%\begin{center}
$C_{[a_{n},\ldots,a_{m}]}:=\{\textbf{i}\in\Sigma_{A}^{(+)}:i_{j}=a_{j}$,
for $ j=n,\ldots,m\}$, where $n,m \in \mathbb{Z}(\mathbb{N}\cup \{0\})$ and $n\leq m$.
%\end{center}
%and
%\begin{center}
%$ C_{[a_{n},\ldots,a_{m}]}:=\{\textbf{i}\in\Sigma_{A}:i_{j}=a_{j} $
%for $ j=n,\ldots,m\},$ where $n,m \in \mathbb{Z}$ and $n\leq m$.
%\end{center}
%We equip $\Sigma_{A}^+$ and $ \Sigma_{A}$ with the topology generated by the base of cylinder sets.
Markov chain $(\Sigma_{A}, \sigma_{A})$ induces a skew product of the form
\begin{equation}\label{s2}
F_{A}:\Sigma_A\times M \to \Sigma_A\times M, \ F_{A}(\textbf{i},x):=(\sigma_{A}\textbf{i}, f_{i_0}(x)),
\end{equation}
where $i_0$ is the digit at the
zero position of $\textbf{i}=(\ldots, i_{-1},i_0,i_1,\ldots)\in \Sigma_A$. The skew product $F_{A}$ is called the \emph{induced skew product} of the semigroup $\Gamma$. Using the inducing scheme of hyperbolic type $(\mathcal{M},\tau)$, we define an inducing scheme for the induced skew product $F_A$ as follows.

For each $j \in\mathbb{N}$, consider the finite word $w_j=(i_j^1,\ldots, i_j^{s_j})$, with $s_j=\tau(M_j^\circ)$ such that $h_j=f_{i_j^{s_j}}\circ\cdots\circ f_{i_j^{1}}$.
Then, we define an injective map $\rho:\Sigma_\mathcal{A}\to \Sigma_A$ such that it takes each branch $(\ldots, i_{-1}, i_0, i_1, \ldots)\in \Sigma_{\mathcal{A}}$ to the concatenation of associated finite words, that is
\begin{equation}\label{rho}
\rho(\ldots, i_{-1}, i_0, i_1, \ldots):=(\ldots ,i_{-1}^1,\ldots, i_{-1}^{s_{-1}},i_{0}^1,\ldots, i_{0}^{s_{0}},  \ldots).
\end{equation}
% For convenience of notations we denote $\rho(\check{\Sigma})=\Sigma$ and
%$(\rho\times id)( \check{\Sigma}\times \widetilde{W})=Z$, where $id$ is the identity map on $M$.
Let us take $C(i)\subset \Sigma_\mathcal{A}$ the cylinder induced by the letter $i \in\mathbb{N}$ and put
\begin{equation}\label{w}
 \mathbb{W}:=\bigcup_{i \in\mathbb{N}}\rho(C(i))\times M_i \subset \Sigma_{A}\times M.
\end{equation}
We now define measurable maps $\psi:\mathbb{W} \to \mathbb{N}\cup \{0\}$ by
\begin{equation}\label{psi}
 \psi(\textbf{i},x):=\left\{
\begin{array}{rl}
\tau(x) & \text{if} \ \ x\in M_{i_0}^\circ\\
0 & \text{otherwise},
\end{array}\right.
\end{equation}
and
$F_A^\psi: \mathbb{W} \to  \mathbb{W}$ by
\begin{equation}\label{s3}
 F_A^\psi(\textbf{i},x):=\left\{
\begin{array}{rl}
(\sigma_A^{\tau(x)}(\textbf{i}),f_{\textbf{i}}^{\tau(x)}(x)) & \text{if} \ \ x\in M_{i_0}^\circ\\
(\textbf{i},x) & \text{otherwise},
\end{array}\right.
\end{equation}
where $i_0$ is the zeroth digit of $\rho^{-1}(\textbf{i})$. Mappings $\psi$ and $F_A^\psi$ are called \emph{inducing time} and \emph{induced map}, respectively.
%We denote by $\mathcal{C}_A$ the Borel $\sigma$-algebra on $\Sigma_A\times M$.
%In what follows, we solve the liftability problem for $F_A^\psi$ and
% describe those measures that give positive weight to the base of the tower and can be lifted.
%To do that, we use the results of \cite{PSZ2, Z} and
%investigate the existence of lifted $F_{A}$-invariant ergodic probability measures.
Let
$
 \mathbb{Y}:=\{F_A^k(\textbf{i},x): (\textbf{i},x)\in \mathbb{W} \ \text{and} \ 0\leq k \leq \psi(\textbf{i},x)-1\},
$
where $\psi$ is the inducing time given by (\ref{psi}). Then, $\mathbb{Y}$ is forward invariant under $F_A$.
Set
%\begin{equation}\label{ss}
 $\mathcal{S}:=\{\rho(C(i))\times M_i: i \in \mathbb{N}\}$.
%\end{equation}
Define the induced map $\mathbb{F}:\mathbb{W} \to \mathbb{W}$ by
\begin{equation}\label{imap}
\mathbb{F}|_{\rho(C(i))\times M_i}:= F_A^\psi |_{\rho(C(i))\times M_i}, \ i \in \mathbb{N}.
\end{equation}
Then $(\mathcal{S},\psi)$ is an inducing scheme for $F_A$.
\begin{remark}\label{rem222}
It is easy to see that $\textbf{i} \in \check{\Sigma}$ is a periodic orbit for the shift $\sigma_{\mathcal{A}}$
if and only if $(\rho(\textbf{i}),\pi(\textbf{i}))$ is a periodic orbit for $\mathbb{F}$, where the subset $\check{\Sigma}\subset \Sigma_\mathcal{A}$ defined by (\ref{hat}).
\end{remark}
%\subsection{Liftable measures}
Denote the set of $F_A$-invariant ergodic Borel probability measures on $\mathbb{Y}$ by $\mathcal{M}(F_A, \mathbb{Y})$ and
the set of $\mathbb{F}$-invariant ergodic Borel probability measures on $\mathbb{W}$ by $\mathcal{M}(\mathbb{F}, \mathbb{W})$.
For any $m \in \mathcal{M}(\mathbb{F}, \mathbb{W})$, let
\begin{equation}\label{q1}
Q_{m}:=\int_{\mathbb{W}}\psi dm.
\end{equation}
%where $\psi$ is the inducing time given by (\ref{s^2}).
If $Q_m < \infty$ the \emph{lifted measure} $\mathcal{L}(m)$ is defined as follows: for any $E \subset \mathbb{Y}$
\begin{equation}\label{lift-m1}
\mathcal{L}(m)(E):=\frac{1}{Q_{m}}\sum_{i\in\mathbb{N}}\sum_{k= 0}^{\psi(\rho(C(i))\times M_i)-1}m( F_A^{-k}(E)\cap \rho(C(i))\times M_i).
\end{equation}
Consider the class of measures
\begin{equation*}
  \mathcal{M}_L(F_A,\mathbb{Y}):=\{\mu \in \mathcal{M}(F_A, \mathbb{Y}): \text{there is} \ m\in \mathcal{M}(\mathbb{F}, \mathbb{W}) \ \text{with} \ \mathcal{L}(m)=\mu\}.
\end{equation*}
A measure $\mu \in \mathcal{M}_L(F_A,\mathbb{Y})$ is called \emph{liftable} and $\mathcal{L}^{-1}\mu$ is an \emph{induced measure} for $\mu$. Note that by \cite[Theorem 1.1]{Z}) if the inducing time $\psi \in L^1(\mathbb{Y},\mu)$, then $\mu \in  \mathcal{M}_L(F_A,\mathbb{Y})$.
%The following statement establishes the existence of liftable measures.
\begin{theorem C}\label{Thm: C}
There exists an invariant ergodic probability measure $m$ for $F_A^{\psi}$ whose support is contained in $\mathbb{W}$.
Furthermore, $Q_m <\infty$ and the measure $m$ can be lifted to an $F_A$-invariant measure.
\end{theorem C}
\subsection{Equilibrium measures and Potentials}
Let $\phi: \Sigma_A\times M \to \mathbb{R}$ be a Borel function. We assume
that $\phi$ is well-defined and is finite at every point $(\textbf{i},x) \in \Sigma_A\times M$ and we call $\phi$ a \emph{potential}. For a potential $\phi$, a measure $\mu_\phi$, in the space $\mathcal{M}_L(F_A,\mathbb{Y})$ of liftable measures, called an \emph{equilibrium measure} for
$\phi$ if
\begin{equation*}
  P_L(\phi):=\sup_{\mu \in \mathcal{M}_L(F_A,\mathbb{Y})}\{h_\mu(F_A)+\int_{\Sigma_{A}\times M} \phi d\mu\}=h_{\mu_\phi}(F_A)+\int_{\Sigma_{A}\times M} \phi d\mu_\phi.
\end{equation*}
This definition of equilibrium measures only considers liftable measures and it differs from the standard one. We also define the induced potential $\overline{\phi}: \mathbb{W} \to \mathbb{R}$ by
\begin{equation}\label{lpsi}
\overline{\phi}(\textbf{i},x) :=\sum_{\ell=0}^{\psi(\rho(C(\textbf{i}))\times M_i)-1}\phi (F_A^\ell(\textbf{i},x)), \ (\textbf{i},x)\in \rho(C(\textbf{i}))\times M_i,
\end{equation}
where $\rho$ and $\psi$ are given by (\ref{rho}) and (\ref{psi}), respectively.
For the induced potential $\overline{\phi}$ given by (\ref{lpsi}), we take $\Phi:\Sigma_\mathcal{A} \to \mathbb{R}$ by
%\begin{equation}\label{bar}
$$\Phi(\textbf{i}):=\overline{\phi}(\rho(\textbf{i}), \pi(\textbf{i})),$$
%\end{equation}
where $\pi$ is the coding map given by (\ref{code}).
Clearly, $ \Phi$ is well-defined on $\Sigma_\mathcal{A}$.
We call a measure $\nu_{\overline{\phi}}$ on $\mathbb{W}$ a Gibbs measure for $\overline{\phi}$ if the measure $(\rho \times \pi)^{-1}_\ast \nu_{\overline{\phi}}$
is a Gibbs measure for $\Phi$.
We call $\nu_{\overline{\phi}}$ an equilibrium measure for $\overline{\phi}$ if $- \int_{\mathbb{W}}\overline{\phi} d\nu_{\overline{\phi}}< \infty$ and
%\begin{equation}\label{gib}
$$h_{\nu_{\overline{\phi}}}(\mathbb{F})+\int_{\mathbb{W}} \overline{\phi} d\nu_{\overline{\phi}} =\sup_{\nu \in \mathcal{M}(\mathbb{F},\mathbb{W}):  \ - \int_{\mathbb{W}}\overline{\phi} d\nu< \infty}\{h_\nu(\mathbb{F})+\int_{\mathbb{W}} \overline{\phi } d\nu \}.$$
%\end{equation}
For $\ell \in \mathbb{N}$, let
\begin{equation*}\label{z}
 Z_n(\Phi, \ell):=\sum_{\sigma_{\mathcal{A}}^n(\textbf{i})=\textbf{i}, i_0=\ell}\exp (\Phi_n(\textbf{i})),
\end{equation*}
where $\Phi_{n}(\textbf{i})=\sum_{k=0}^{n-1}\Phi \circ \sigma_{\mathcal{A}}^{k}(\textbf{i})$.
The \emph{Gurevic} pressure of $\Phi$ is defined by
%\begin{equation}\label{gur}
$$\text{P}_G(\Phi):=\lim_{n \to \infty}\frac{1}{n} \log Z_n(\Phi, \ell).$$
%\end{equation}
This notion introduced by Gurevic in \cite{G1, G2} which is a generalization of the notion of topological entropy
$h_G(\sigma_{\mathcal{A}})$ for countable Markov chains, so that $P_G(0) = h_G(\sigma_{\mathcal{A}})$.
For a potential $\Phi:\Sigma_{\mathcal{A}} \to \mathbb{R}$ we denote by
\begin{equation*}\label{pot}
\text{Var}_n(\Phi):=\sup\{|\Phi(\textbf{i})-\Phi(\textbf{j})|: \textbf{i},\textbf{j} \in \Sigma_{\mathcal{A}}, \ i_\ell=j_\ell, \ -n+1 \leq \ell \leq n-1\}
\end{equation*}
the $n$-th \emph{variation} of $\Phi$. Since the Gurevich pressure only depends on the positive side of the sequences,
$ P_G(\Phi)$ exists whenever $\sum_{n=1}^\infty n \text{Var}_n(\Phi)< \infty$
 and it is independent of $\ell \in \mathbb{N}$ by Theorem 1 of \cite{Sa1}. Following \cite{PSZ2}, the potential $\overline{\phi}$
\begin{enumerate}
\item [(a)] has \emph{summable variations} if the function $\Phi$ has summable variations, i.e.,
%\begin{equation*}
$$\sum_{n \geq 1} \text{Var}_n(\overline{\phi} \circ (\rho \times \pi)):=\sum_{n \geq 1}\text{Var}_n(\Phi)< \infty,$$
%\end{equation*}P
\item [(b)] has \emph{finite Gurevich pressure} if $\text{P}_G(\overline{\phi} \circ (\rho \times \pi))=\text{P}_G(\Phi)< \infty$.
\end{enumerate}
Let us take
%\begin{equation*}
$\phi^+:=\overline{\phi - \text{P}_L(\phi)}=\overline{\phi}-\text{P}_L(\phi)\psi$
%\end{equation*}
and let $\Phi^+:= \phi^+ \circ (\rho \times \pi)$ with $\rho \times \pi(\textbf{i})=(\rho(\textbf{i}),\pi(\textbf{i}))$, for $\textbf{i}\in \Sigma_{\mathcal{A}}$.% where $\pi$ is the coding map given by (\ref{code}).
We also impose some additional condition to potential $\phi$:
 \begin{enumerate}
   \item [(\textbf{P1})] there exist $C > 0$ and $0 < r < 1$ such that for any $n \geq 1$, $\text{Var}_n(\Phi) \leq C r^n$,
   \item [(\textbf{P2})] $$\sum_{\rho(C(\textbf{i}))\times M_i}\sup_{(\textbf{i},x) \in \rho(C(\textbf{i}))\times M_i}\exp \overline{\phi}(\textbf{i},x)<\infty,$$
   \item [(\textbf{P3})] there exists $\varepsilon > 0$ such that
   $$\sum_{\rho(C(\textbf{i}))\times M_i}\psi(\rho(C(\textbf{i}))\times M_i)\sup_{(\textbf{i},x) \in \rho(C(\textbf{i}))\times M_i}\exp (\phi^+(\textbf{i},x)+\varepsilon \psi(\textbf{i},x))<\infty. $$
 \end{enumerate}
  These conditions considered in \cite{PSZ2} for deterministic systems.
The next result shows that the induced skew product $F_A$ possesses a unique equilibrium measure $\mu_\phi$, associated to each $\phi$ satisfying Conditions
(\textbf{P1})-(\textbf{P3}), which minimizes the free energy among the measures that are liftable to the tower.
\begin{theorem D}\label{D}
Let $(\mathcal{S},\psi)$ be an inducing scheme for the induced skew product $F_A$ corresponding to the inducing scheme of hyperbolic
type $(\mathcal{M},\tau)$ for a topologically mixing locally conformal expanding semigroup $\Gamma$. Assume that the potential function $\phi$ satisfies Conditions
$(\normalfont{\textbf{P}}1)-(\normalfont{\textbf{P}}3)$. Then there exists a unique equilibrium measure $\mu_\phi$ for $\phi$ among all measures in
$\mathcal{M}_{L}(F_A,\mathbb{Y})$. In particular, $\mu_\phi$ is ergodic.
\end{theorem D}

\hspace{-.4cm}\textbf{Organization.}
Let us give an overview of the content of the paper.
In Subsection \ref{sub3}, we prove Theorem A and construct a special countable Markov partition for topologically mixing locally conformal expanding semigroup actions which is
achieved primarily through a finite cover and satisfies the finite cycle property.
We will obtain a bounded distortion formulation for the induced map $F$ in Subsection \ref{section4}.
As a consequence, we investigate the existence of a $F$-invariant absolutely continuous probability measure.
In Subsection \ref{section5}, we show that each locally conformal topologically mixing semigroup of $C^{1+\alpha}$ conformal (local) diffeomorphisms
on a compact connected manifold $M$ exhibits an inducing scheme of hyperbolic type satisfying conditions (\textbf{H1})-(\textbf{H4}), this proves Theorem B.
Our goal in Subsection \ref{section6} is to carry out the thermodynamic formalism
for countable subshift $\sigma_{\mathcal{A}}$ on the space $\Sigma_{\mathcal{A}}$ of two-sided infinite sequences.
Note that for locally conformal semigroups, the semigroup action may be
understood either as a random walk.
It ensures to provide a link between the dynamics of the semigroup action and the induced skew product $F_{A}$ given by (\ref{s2}).
%Then, we explore a relation between ergodic properties of semigroup actions and their induced skew products.
The tower construction of locally conformal semigroup actions
allows us to define an inducing scheme for their induced skew products.
%Using this inducing scheme we decode back the information obtained for the induced map $F$ into information about the original semigroup action.
% we require  a relation between $F$-invariant ergodic probability measures and stationary measures of $\Gamma$.
Then, we apply the notion of random Markov fibred system introduced in \cite{DKS} to construct a suitable Markov measure in Section \ref{Section: 5.1}.
This yields the set of liftable measures of induced skew products is nonempty.
In Section \ref{section8}, we solve the liftability problem for towers associated with inducing
schemes of induced skew products and prove Theorem C.
%We will provide an inducing time and induced map for the skew product $F_{A}$.
By Zweim\"{u}ller \cite{Z}, a measure is liftable to the tower if it has integrable inducing time.
Finally, using the Zweim\"{u}ller's result and Abramov's and Kac's formula, we provide a crucial relation between the thermodynamic
formalism of original and induced systems.
Theorem D is proved in Section \ref{section8}.
\section{Countable Markov partitions and ergodicity of induced map}\label{section3}
Dynamical systems that admit Markov partitions with finite or
countable number of partition elements allow symbolic representations by
topological Markov shifts with finite or countable alphabets. As
a result these systems exhibit high level of chaotic behaviors of trajectories.
In this section we will construct a countable Markov partition with the finite images (FIP) and the finite cycle (FCP) properties for locally conformal semigroups.
Moreover, we use the
countable Markov partition to acquire a bounded distortion formulation for the induced map $F$ given by (\ref{ind}).
\subsection{Countable Markov partitions associated to locally conformal actions}\label{sub3}
Our aim here is to construct a countable Markov partition for locally conformal semigroups which satisfies FIP and FCP.
Let us fix a locally conformal expanding semigroup $\Gamma$ generated by a finite family $f_1, \ldots, f_k$ of $C^{1+\alpha}$ local diffeomorphisms on a compact manifold $M$
satisfying conditions $\textbf{\text{(C1)}}$ and $\textbf{\text{(C2)}}$ in Definition \ref{expand}. Take $\eta>0$ the Lebesgue number of the covering $\{V_1, \ldots , V_k\}$. Assume $\eta$ is small enough so that $\eta < \frac{r}{2}$.
Suppose that $w=(i_0, \ldots, i_{n-1})$ is an admissible word for $x$ of the length $n$.
For any $0 < \varepsilon < \frac{\eta}{2}$ the \emph{dynamical} $n$-\emph{ball} $B(x,n,w,\varepsilon)$ defined as follows
\begin{equation}\label{01}
B(x,n,w,\varepsilon):=\{y \in M: d(f_w^j(x),f_w^j(y))<\varepsilon, \ \text{for \ every} \ 0\leq j \leq n\}.
\end{equation}
Recall that $f_w^n=f_{i_{n-1}}\circ \cdots \circ f_{i_0}$.
\begin{lemma}\label{lem0}
Take a dynamical $n$-ball of the form (\ref{01}). Then $f_w^{n}(B(x,n,w,\varepsilon))=B(f_w^{n}(x),\varepsilon)$, for any admissible $n$-word of $x$.
\end{lemma}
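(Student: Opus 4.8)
The plan is to fix an $(n+1)$-admissible word $w=(i_1,\dots,i_{n+1})$ for $x$, write $x_j:=f_w^{j}(x)$, and note that by admissibility $x_j\in V_{i_{j+1}}$ for every $0\le j\le n$, so that (since $\varepsilon<\eta/2<r$) $B(x_j,\varepsilon)\subset B_r(V_{i_{j+1}})$, the neighbourhood on which $f_{i_{j+1}}$ is an expanding diffeomorphism. The statement then splits into two inclusions. The inclusion $f_w^{n}\big(B(x,n+1,w,\varepsilon)\big)\subseteq B(x_n,\varepsilon)$ is immediate once one checks that $f_w^{n}$ is defined on the dynamical ball at all: for $y$ in it, the constraints $d(x_j,f_w^{j}(y))<\varepsilon$ place $f_w^{j}(y)$ inside $B(x_j,\varepsilon)\subset B_r(V_{i_{j+1}})$, so $f_w^{j+1}(y)=f_{i_{j+1}}(f_w^{j}(y))$ is defined, inductively for $0\le j\le n-1$, and the $j=n$ constraint then says $f_w^{n}(y)\in B(x_n,\varepsilon)$. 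The content of the lemma is the reverse inclusion, which I would obtain by pulling a target point $q\in B(x_n,\varepsilon)$ back through the generators one branch at a time.

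The one nontrivial ingredient is a geometric fact about a single expanding branch: if $g\colon B_r(V_i)\to M$ is a diffeomorphism onto its image with $\|Dg(\cdot)^{-1}\|<\sigma<1$ and $p\in V_i$, then $g\big(B(p,\varepsilon)\big)\supseteq B(g(p),\varepsilon)$ whenever $\varepsilon<r$. I would prove this by a standard continuation/path-lifting argument. Given $q'\in B(g(p),\varepsilon)$, join $g(p)$ to $q'$ by a minimizing geodesic $\gamma$ (so $\mathrm{length}(\gamma)=d(g(p),q')<\varepsilon$) and lift it through the local inverse $h=(g|_{B_r(V_i)})^{-1}$. The set of parameters $t$ for which $\gamma([0,t])\subset g\big(B(p,\varepsilon)\big)$, so that the lift $\widetilde\gamma=h\circ\gamma$ is defined up to $t$, is nonempty and open; it is also closed, because $\|Dh\|<\sigma$ on $g(B(p,\varepsilon))$ forces $\mathrm{length}(\widetilde\gamma)\le\sigma\,\mathrm{length}(\gamma)<\sigma\varepsilon$, so $\widetilde\gamma$ never leaves the compact ball $\overline{B(p,\sigma\varepsilon)}\subset B(p,\varepsilon)$ and hence extends continuously to the closed parameter interval, where $g(\widetilde\gamma(t))=\gamma(t)$ by continuity. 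Therefore the set is all of $[0,1]$, and $q'=g(\widetilde\gamma(1))\in g\big(B(p,\varepsilon)\big)$.

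Applying this with $g=f_{i_{j+1}}$ and $p=x_j$ yields $f_{i_{j+1}}\big(B(x_j,\varepsilon)\big)\supseteq B(x_{j+1},\varepsilon)$ for $0\le j\le n-1$. Starting from $q_n:=q\in B(x_n,\varepsilon)$, I can then define $q_j$, for $j=n-1,\dots,0$, as the (unique) preimage of $q_{j+1}$ under $f_{i_{j+1}}$ that lies in $B(x_j,\varepsilon)$, namely $q_j=(f_{i_{j+1}}|_{B_r(V_{i_{j+1}})})^{-1}(q_{j+1})$; uniqueness holds because $f_{i_{j+1}}$ is injective on $B_r(V_{i_{j+1}})$. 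Then $y:=q_0$ satisfies $f_w^{j}(y)=q_j\in B(x_j,\varepsilon)$ for all $0\le j\le n$, i.e.\ $y\in B(x,n+1,w,\varepsilon)$, and $f_w^{n}(y)=q$. This gives $B(x_n,\varepsilon)\subseteq f_w^{n}\big(B(x,n+1,w,\varepsilon)\big)$ and finishes the proof. Moreover the same chain of injective branches shows $f_w^{n}$ is injective on $B(x,n+1,w,\varepsilon)$, so it restricts to a homeomorphism onto $B(f_w^{n}(x),\varepsilon)$ — which is precisely what legitimizes writing $f_w^{-n}(B)=(f_w^{n})^{-1}(B)$ for the balls $B$ occurring in the definition of an admissible $n$-cylinder.

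The main — and essentially the only — obstacle is the continuation argument of the second paragraph, namely that an expanding branch carries an $\varepsilon$-ball onto a set containing the $\varepsilon$-ball around the image of its centre; the rest is routine bookkeeping with admissible words and the constants $r,\eta,\sigma$. The delicate point in that argument is keeping the lifted curve inside the region $B_r(V_i)$ where the branch is a diffeomorphism, which is exactly why the smallness assumption $\varepsilon<\eta/2$ and the uniform contraction rate $\sigma<1$ are needed.
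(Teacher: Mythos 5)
Your proof is correct, and it is worth comparing with the paper's own argument, which is shorter but leaves the key point implicit. Both proofs treat the inclusion $f_w^{n}\big(B(x,n+1,w,\varepsilon)\big)\subseteq B(f_w^{n}(x),\varepsilon)$ as immediate from the definition and obtain the reverse inclusion by pulling points of $B(f_w^{n}(x),\varepsilon)$ back through contracting inverse branches. The paper, however, simply ``considers the inverse map $f_w^{-n}\colon B(f_w^{n}(x),\varepsilon)\to B(x,\varepsilon)$'' and invokes the contraction estimate $d\big(f_w^{j}(f_w^{-n}(z)),f_w^{j}(f_w^{-n}(u))\big)\leq\sigma^{\,n-j}d(z,u)$ (stated as obvious from assumption (b)) to place the preimage inside the dynamical ball; it never justifies that this inverse branch is defined on the whole ball $B(f_w^{n}(x),\varepsilon)$, which is exactly the nontrivial content of the lemma. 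You supply precisely that missing ingredient: the single-branch covering statement $g\big(B(p,\varepsilon)\big)\supseteq B(g(p),\varepsilon)$ for an expanding branch with $\|Dg^{-1}\|<\sigma$ on $B_r(V_i)$, proved by lifting a minimizing geodesic through the local inverse and using the open-closed (continuation) argument, with the contraction keeping the lift inside $\overline{B(p,\sigma\varepsilon)}$ so it cannot escape the domain of the branch; you then build $f_w^{-n}$ one generator at a time by induction along the admissible word. Your route is therefore more detailed at the crucial step and also yields explicitly the injectivity of $f_w^{n}$ on the dynamical ball, i.e.\ that $f_w^{-n}(B)$ is well defined, which the paper uses later without comment; the paper's version buys brevity by working with $f_w^{-n}$ all at once, at the cost of assuming its existence on the ball. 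Your bookkeeping of the constants ($\varepsilon<\eta/2$, $\eta<r/2$, so $B(x_j,\varepsilon)\subset B_r(V_{i_{j+1}})$) is consistent with the paper's conventions, so no gap remains.
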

\begin{proof}
Obviously, by definition of a dynamical ball and due to condition $\textbf{\text{(I1)}}$, we get
\begin{equation}\label{in}
d(f_w^j \circ f_w^{-n}(z), f_w^j \circ f_w^{-n}(u)) \leq \sigma^{n-j}d(z,u)
\end{equation}
for every $z,u \in B(f_w^n(x), \varepsilon)$ and every $0 \leq j \leq n$, where $f_w^{-n}=(f_w^n)^{-1}$. Now, the
inclusion $f_w^n(B(x,n,w,\varepsilon))\subseteq B(f_w^n(x),\varepsilon)$ is an immediate consequence of definition of a dynamical ball.
To prove the converse, consider the inverse map $f_w^{-n}:B(f_w^n(x), \varepsilon) \to B(x,\varepsilon)$.
Given any $y \in B(f_w^n(x),\varepsilon)$, let $z = f_w^{-n}(y)$. Then, $f_w^n(z) = y$ and, by (\ref{in}), for every $0 \leq j \leq n$,
\onehalfspacing
\begin{equation*}
d(f_w^j(z), f_w^j (x)) \leq \sigma^{n-j}d(f_w^n(z), f_w^n(x)) \leq d(y, f_w^n(x)) < \varepsilon.
\end{equation*}
This shows that $z \in B(x,n,w,\varepsilon)$.
\end{proof}
%\begin{definition}
%For a given pair of points $x,y\in M$, let $y$ be accessible by $x$ along an admissible $n$-word $w$ of $x$, i.e. $f_w^n(x)=y$.
%Given a ball $B$ of radius $\varepsilon$ and center $y$,
%if $y$ is accessible by a point $x\in M$, along an admissible word $w=( i_1, \ldots, i_n)$,
%by Lemma \ref{lem0}, one gets that $f_w^{-n}(B)=B(x,n,w,\varepsilon)$. In this case, we say that $B$ is \emph{accessible} from the $n$-ball $B(x,n,w,\varepsilon)$.
%Note that, here, the ball of radius $r$ is meant with respect to the Riemannian distance $d(x,y)$ on $M$.

%\vspace{-2.5cm}
Given any path-connected domain $D \subseteq M$, we define the inner
distance $d_D(x, y)$ between two points $x$ and $y$ in $D$ to be the infimum of the
lengths of all curves joining $x$ to $y$ inside $D$. For any curve $\gamma$ inside $D$, we denote by $d_D(\gamma)$ the length of $\gamma$.
For a ball $B$, the notation $\mathcal{D}(B)$ stands for the set of all diameters of $B$.
We recall that a diameter of $B$ is a geodesic curve $\gamma$ inside $B$ which connects two points $x$ and $y$ from the boundary $\partial B$.
We need the following auxiliary lemma.

%\vspace{-.8cm}
\begin{lemma}\label{lem1}
There is $K>0$ such that for any open ball $B=B(y,\varepsilon) $, with $\varepsilon \leq \frac{\eta}{2}$, any $n \in \mathbb{N}$
and any admissible $n$-word $w=(i_1, \ldots, i_{n})$ such that $B$ is accessible by an $n$-ball $B_{n}=B(x,n,w,\varepsilon)$ around the point $x$,
one has that
\onehalfspacing
\begin{equation*}
\min_{\gamma \in\mathcal{D}(B)}d_{B_{n}}(\gamma_n)\geq K \max_{\gamma \in\mathcal{D}(B)}d_{B_{n}}(\gamma_n),
\end{equation*}
where, for each curve $\gamma \in\mathcal{D}(B)$, $\gamma_n$ is the lifting of $\gamma$ inside $B(x,n,w,\varepsilon)$, i.e. $f_w^n(\gamma_n)=\gamma$.
\end{lemma}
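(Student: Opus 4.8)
The plan is to reduce the whole statement to a single \emph{uniform bounded distortion} estimate for $f_w^n$ on the dynamical ball $B_{n+1}$. Concretely, I would show there is a constant $C>0$ depending only on $\sigma$, $\alpha$, $\eta$ and the $C^{1+\alpha}$-norms of $f_1,\dots,f_k$ (hence independent of $n$, $w$, $x$, $y$, $\varepsilon$) with
\[
\bigl|\log\|Df_w^n(z)\|-\log\|Df_w^n(z')\|\bigr|\le C\qquad\text{for all }z,z'\in B_{n+1};
\]
equivalently, writing $\beta(u):=\|Df_w^{-n}(u)\|$ for the conformal factor of the inverse branch $f_w^{-n}\colon B\to B_{n+1}$, one has $\sup_B\beta\le e^{C}\inf_B\beta$. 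Granting this, the lemma is immediate: each $\gamma\in\mathcal{D}(B)$ is a unit‑speed geodesic through the centre of $B$, of length $2\varepsilon$, and since $f_w^{-n}$ is conformal the length of its lift is
\[
d_{B_{n+1}}(\gamma_n)=\int_0^{2\varepsilon}\|Df_w^{-n}(\gamma(t))\,\gamma'(t)\|\,dt=\int_0^{2\varepsilon}\beta(\gamma(t))\,dt,
\]
conformality being exactly what allows the norm of the image of $\gamma'(t)$ to be replaced by the scalar $\beta(\gamma(t))$. Hence $\min_{\gamma}d_{B_{n+1}}(\gamma_n)\ge 2\varepsilon\inf_B\beta\ge e^{-C}\,2\varepsilon\sup_B\beta\ge e^{-C}\max_{\gamma}d_{B_{n+1}}(\gamma_n)$, so one may take $K:=e^{-C}$.

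To prove the distortion estimate I would first verify that $f_w^n$ is a conformal diffeomorphism of $B_{n+1}$ onto $B$. By admissibility of $w=(i_1,\dots,i_n)$ for $x$ we have $f_w^{j-1}(x)\in V_{i_j}$ for $1\le j\le n$, and by the argument proving Lemma \ref{lem0} the whole orbit piece satisfies $f_w^{j-1}(B_{n+1})\subseteq B(f_w^{j-1}(x),\varepsilon)$; since $\varepsilon<\eta/2<r/2$, this ball lies in $B_r(V_{i_j})$, where (by $(a)$--$(c)$) $f_{i_j}$ is a conformal diffeomorphism with $\|Df_{i_j}(\cdot)^{-1}\|<\sigma$. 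Composing, $f_w^n$ is conformal on $B_{n+1}$, maps it onto $B$ by Lemma \ref{lem0}, and its conformal factor factors as $\|Df_w^n(z)\|=\prod_{j=1}^{n}\alpha_{i_j}(f_w^{j-1}(z))$ with $\alpha_i:=\|Df_i\|=m(Df_i)\ge\sigma^{-1}>1$ on $B_r(V_i)$. Moreover the same computation as in Lemma \ref{lem0} gives, for $z,z'\in B_{n+1}$ and $0\le j\le n$, $d(f_w^{j}(z),f_w^{j}(z'))\le\sigma^{n-j}d(f_w^n(z),f_w^n(z'))\le 2\varepsilon\,\sigma^{n-j}$.

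Next I would carry out the telescoping estimate. Because $f_i\in C^{1+\alpha}$ and $\alpha_i$ is bounded away from $0$ on $B_r(V_i)$, the functions $\log\alpha_i$ are $\alpha$-Hölder with a common constant $C_0$, whence for $z,z'\in B_{n+1}$
\[
\bigl|\log\|Df_w^n(z)\|-\log\|Df_w^n(z')\|\bigr|
\le\sum_{j=1}^{n}\bigl|\log\alpha_{i_j}(f_w^{j-1}(z))-\log\alpha_{i_j}(f_w^{j-1}(z'))\bigr|
\le C_0\sum_{j=1}^{n}\bigl(2\varepsilon\,\sigma^{n-j+1}\bigr)^{\alpha}
\le C_0\,\eta^{\alpha}\,\frac{\sigma^{\alpha}}{1-\sigma^{\alpha}}=:C,
\]
using $2\varepsilon<\eta$ and summing the geometric series; $C$ is visibly independent of $n,w,x,\varepsilon$. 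Transferring to the inverse branch via $\|Df_w^{-n}(u)\|=\|Df_w^n(f_w^{-n}(u))\|^{-1}$ (legitimate precisely because the maps are conformal) yields $\sup_B\beta\le e^{C}\inf_B\beta$, and the first paragraph concludes the proof.

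I expect the only genuinely delicate point to be the bookkeeping in the second step — ensuring that the entire forward orbit $\{f_w^{j}(B_{n+1})\}_{j=0}^{n}$ stays inside the neighbourhoods $B_r(V_{i_{j+1}})$ on which the generators are conformal diffeomorphisms uniformly contracted by their inverses — which is exactly where the standing choices $\varepsilon<\eta/2<r/2$ and the Lebesgue‑number property of $\{V_1,\dots,V_k\}$ are used; everything after that is the classical telescoping plus geometric‑series bounded distortion computation. (One should read ``diameter of $B$'' as a geodesic through the centre, so that all elements of $\mathcal{D}(B)$ share the common length $2\varepsilon$; if arbitrary geodesic chords are allowed the same argument applies with lengths only comparable up to a fixed factor, which changes only the value of $K$.)
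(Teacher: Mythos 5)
Your proposal is correct and follows essentially the same route as the paper: both use conformality to write the factor of $f_w^{-n}$ as a product over the admissible word, apply H\"older continuity of the logarithm of that factor together with the contraction $d(f_w^{j}(z),f_w^{j}(z'))\le\sigma^{n-j}d(f_w^{n}(z),f_w^{n}(z'))$ to telescope into a geometric series, and then compare the lengths of lifted diameters via the resulting uniform distortion constant. The only cosmetic difference is that you take $K=e^{-C}\le 1$, matching the stated direction of the inequality, whereas the paper sets $K=\exp\bigl(C(2\gamma)^{\alpha}/(1-\sigma^{\alpha})\bigr)\ge 1$ and implicitly means $K^{-1}$.
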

%\vspace{-1.3cm}
\begin{proof}
For any $z\in B$, take the points $z_j$, $1\leq j\leq n$, such that $z_n=z$, $z_{j-1}=f_{i_j}^{-1}(z_j)$ and let $a_{i_j}(z_j):=m(Df_{i_j}^{-1}(z_j))=\|Df_{i_j}^{-1}(z_j)\|$. By conformality of the mappings $f_j |_{B_r(V_j)}$, $j=1, \ldots, k$, one has that
$m(D f_w^{-n}(z))=\|D f_w^{-n}(z)\|=\prod_{j=1}^n a_{i_j}(z_j)$. Take $a_{( i_1, \ldots, i_n)}(z):=\prod_{j=1}^n a_{i_j}(z_j)$, we conclude
\onehalfspacing

\begin{equation*}\label{3e}
\frac{\max_{\gamma \in\mathcal{D}(B)}d_{B_{n+1}}(f_w^{-n}\gamma)}{\min_{\gamma \in\mathcal{D}(B)}d_{B_{n+1}}(f_w^{-n}\gamma)}\leq \frac{\sup\{a_{( i_1, \ldots, i_n)}(z): z \in B\}}{\inf\{a_{(i_1, \ldots, i_n)}(z): z \in B\}}.
\end{equation*}
Let $F_j(x):=\log (a_j(x))$, for $x \in f_j(B_r(V_j))$ and $j=1, \ldots, k$, where $a_j(x)=m(Df_{j}^{-1}(x))$. Since $f_j$ is $C^{1+\alpha}$, the mapping $F_j$ is H\"{o}lder continuous, hence there exist constants $C\geq 1$ and $\alpha >0$ such that for any $x,y \in f_j(B_r(V_j))$, $j=1, \ldots, k$, one has that
$|F_j(x)-F_j(y)|\leq C d(x,y)^\alpha. $
Hence, for every $z,u \in B$, taking the points $z_j$ and $u_j$ with $z_n=z$, $u_n=u$, $z_{j-1}=f_{i_j}^{-1}(z_{j})$ and $u_{j-1}=f_{i_j}^{-1}(u_{j})$, $j=1, \ldots, n$, one has that
\begin{align*}
\log a_{( i_1, \ldots, i_n)}(z)-\log a_{( i_1, \ldots, i_n)}(u)|
&=|\sum_{j=1}^n \log a_{i_j}(z_j)- \sum_{j=1}^n \log a_{i_j}(u_j)|\\
&\leq  \sum_{j=1}^n |\log a_{i_j}(z_j)- \log a_{i_j}(u_j)|\\
&=\sum_{j=1}^n |F_{i_j}(z_j)-F_{i_j}(u_j)|\\
&\leq C \sum_{j=1}^n d(z_j,u_j)^\alpha \\
&\leq  C(2\rho)^\alpha \sum_{j=1}^n \sigma^{{(n-j)} \alpha}\\
&\leq  \frac{C(2\rho)^\alpha}{1-\sigma^{\alpha}},
\end{align*}
where $\rho=\max\{\text{diam}(f_j(B_r(V_j))): j=1, \ldots, k\}$.
It is enough to take $K:=\exp{(\frac{C(2\rho)^\alpha}{1-\sigma^{\alpha}})}$.
\end{proof}
Let us take a finite covering $\mathcal{B}=\{B_1, \ldots, B_N\}$ of open balls of $M$ with centers $p_j$, $j=1, \ldots, N$ and radius $\varepsilon$ with $0 < \varepsilon \leq \eta/6$,
where $\eta < r/2$ is the Lebesgue number of the covering $\{V_1, \ldots , V_k\}$. If the point $p_j$ is accessible by a point $x\in M$ along an admissible word $w=( i_1, \ldots, i_n)$ of $x$ then by Lemma \ref{lem0}, $f_w^{-n}(B_j)=B(x,n,w,\varepsilon)$, where $B(x,n,w,\varepsilon)$ is a dynamical $n$-ball around $x$. Put $M_{n,j,w}(x):=\text{Cl}(f_w^{-n}(B_j))$. If there is no confusion, we will abbreviate $M_{n,j,w}(x)$ by $M_{n,j,w}$. Briefly, we say that $M_{n,j,w}$ is a closed dynamical ball which is \emph{achieved through} the collection $\mathcal{B}$ and denote it by $M_{n,j,w}\vDash B_j$.
Let us take AD the set of all finite admissible words.
\begin{proposition}\label{pro1}
The set
\begin{equation*}\label{v}
\mathcal{V}:=\{M_{n,j,w}: n\in \mathbb{N}, j\in \{1, \ldots, N\}, \ w\in \normalfont{AD} \ \text{with} \ M_{n,j,w}\vDash B_j\}
\end{equation*}
of all closed dynamical balls achievable through the collection $\mathcal{B}$ satisfies the following conditions:
\begin{enumerate}
\item [$(1)$] $\mathcal{V}$ is a covering of $M$;
\item [$(2)$] for every $x \in M$ and $\delta > 0$, there is an element $M_{n,j,w}\in \mathcal{V}$ such that $x \in M_{n,j,w}$ and $\text{diam}(M_{n,j,w})<\delta$;
\item [$(3)$] there is a constant $L_1 >0$, does not depend on the indices $n$, $j$ and the word $w$, such that for any $M_{n,j,w}\in \mathcal{V}$, $\text{diam}(M_{n,j,w})^d \leq L_1 \lambda(M_{n,j,w})$, where $d=\text{dim}(M)$ and $\lambda$ denotes the normalized Lebesgue measure on $M$.
\end{enumerate}
\end{proposition}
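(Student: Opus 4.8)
The plan is to establish 2) first (it is the statement with content), deduce 1) from it, and then obtain 3) by the standard conformal bounded--distortion estimate; topological mixing is not needed here — only local expansion, conformality and the covering data fixed at the beginning of the section enter. The preparatory point I would record is that every point of $M$ carries a robust infinite itinerary: given $x\in M$, choose indices $i_1,i_2,\dots\in\{1,\dots,k\}$ inductively so that $B\big(f_{w_l}^{l}(x),\eta/2\big)\subseteq V_{i_{l+1}}$ for every $l\ge 0$, where $w_l=(i_1,\dots,i_l)$ and $f_{w_0}^{0}=\mathrm{id}$. This is possible because $\eta$ is the Lebesgue number of $\{V_1,\dots,V_k\}$. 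Since $\varepsilon\le\eta/6<\eta/2$ and points of a dynamical ball stay within $\varepsilon$ of the orbit of its centre at every step, each finite word $w_n=(i_1,\dots,i_n)$ is then admissible not merely for $x$ but for every point of the dynamical ball $B(x,n+1,w_n,\varepsilon)$.

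For 2), fix $x$ and $\delta>0$, pick $n$ with $2\varepsilon\sigma^{n}<\delta$, and put $y:=f_{w_n}^{n}(x)$. Since $\mathcal B$ covers $M$, $y\in B_j=B(p_j,\varepsilon)$ for some $j$, so $p_j\in B(y,\varepsilon)$. By Lemma~\ref{lem0} the inverse branch $f_{w_n}^{-n}$ is defined and $\sigma^{n}$--contracting on $B(y,\varepsilon)$ with image $B(x,n+1,w_n,\varepsilon)$; hence $\xi:=f_{w_n}^{-n}(p_j)$ lies in that dynamical ball, $f_{w_n}^{n}(\xi)=p_j$, and $w_n$ is admissible for $\xi$ by the preparatory point, so $p_j$ is accessible by $\xi$ along $w_n$ — which is precisely the condition making $M_{n,j,w_n}=\text{Cl}\big(f_{w_n}^{-n}(B_j)\big)$ an element of $\mathcal V$. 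Moreover $x=f_{w_n}^{-n}(y)\in f_{w_n}^{-n}(B_j)\subseteq M_{n,j,w_n}$, and inequality~(\ref{in}) of Lemma~\ref{lem0} (at $j=0$) bounds the distance between any two points of $f_{w_n}^{-n}(B_j)$ by $\sigma^{n}\diam(B_j)\le 2\varepsilon\sigma^{n}<\delta$, so $\diam(M_{n,j,w_n})<\delta$. Then 1) is immediate — apply this to every $x$, say with $\delta=\diam(M)$ — and indeed the one--step case $n=1$, $w=(i_1)$, already covers $M$.

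For 3), write $a_w(z):=\|Df_w^{-n}(z)\|=m\big(Df_w^{-n}(z)\big)$ for $z\in B_j$, with $f_w^{-n}$ the branch defining $M_{n,j,w}$. Conformality of the $f_i|_{B_r(V_i)}$ propagates through the chain rule, so $f_w^{-n}$ is conformal on $B_j$ and $|\det Df_w^{-n}(z)|=a_w(z)^{d}$, $d=\dim M$; and the H\"older estimate carried out in the proof of Lemma~\ref{lem1} supplies a constant $K\ge 1$, independent of $n,j,w$, with $\sup_{B_j}a_w\le K\inf_{B_j}a_w$. By the change of variables formula (the boundary $f_w^{-n}(\partial B_j)$ is $\lambda$--null), $\lambda(M_{n,j,w})=\int_{B_j}a_w(z)^{d}\,d\lambda(z)\ge\big(\inf_{B_j}a_w\big)^{d}\lambda(B_j)$; and joining any two points of $B_j$ by a curve in $B_j$ of length $\le 2\varepsilon$ and transporting it by $f_w^{-n}$ gives $\diam(M_{n,j,w})\le 2\varepsilon\sup_{B_j}a_w$. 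Combining these with the distortion bound and with $\lambda(B(p_j,\varepsilon))\ge c_0\varepsilon^{d}$ (a uniform lower bound valid on the compact manifold $M$ for $\varepsilon\le\eta/6$, with $c_0>0$ not depending on $j$) yields $\diam(M_{n,j,w})^{d}\le(2\varepsilon)^{d}K^{d}\big(\inf_{B_j}a_w\big)^{d}\le(2^{d}K^{d}/c_0)\,\lambda(M_{n,j,w})$, so $K_1:=2^{d}K^{d}/c_0$ works and is patently independent of $n$, $j$ and $w$.

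The only genuine friction I anticipate is the admissibility bookkeeping in 1)--2): one must build the itinerary of $x$ with an $\eta/2$ margin inside the sets $V_i$ so that admissibility is inherited by the auxiliary base point $\xi$ produced inside the dynamical ball — without that margin the pulled--back ball need not literally belong to $\mathcal V$. Granting Lemmas~\ref{lem0} and~\ref{lem1}, part 3) is a routine conformal computation.
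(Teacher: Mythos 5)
Your proposal is correct and follows essentially the same route as the paper: construct admissible words of arbitrary length at every point, pull back members of $\mathcal{B}$ via Lemma \ref{lem0} to get conditions (1) and (2), and obtain condition (3) from the distortion estimate underlying Lemma \ref{lem1}. The admissibility bookkeeping with the $\eta/2$ margin (guaranteeing the centre $p_j$ is genuinely accessible, so the pulled-back ball really lies in $\mathcal{V}$) and the explicit conformal change-of-variables computation for (3) are details the paper leaves implicit, and your treatment of them is sound.
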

Each covering $\mathcal{V}$ exhibiting all conditions (1), (2) and (3) of Proposition \ref{pro1} is called a \emph{Vitali covering} of $M$.
\begin{proof}
We first claim that for each $\ell \in \mathbb{N}$, the collection
\begin{equation}\label{vn}
\mathcal{V}_\ell:=\{M_{n,j,w}\in \mathcal{V}: n \geq \ell, \ j=1, \ldots, N \}
\end{equation}
is a covering of $M$. Indeed, for each $x\in M$ and for any $n \geq \ell$, there exists an admissible word $w\in AD$ of the length $n$.
Since $\mathcal{B}$ is a covering of $M$ there is $j \in \{1, \ldots, N\}$, $f_w^n(x) \in B_j$ and hence, $x\in f_w^{-n}(B_j)$, that is $x$ is contained in $M_{n,j,w}$. Now, since $\mathcal{V}=\bigcup_{\ell=1}^\infty \mathcal{V}_\ell$, so the first condition holds. Condition (2) is an immediate consequence of inequality (\ref{in}) given in Lemma \ref{lem0}.
Finally, the last condition is a consequence of Lemma \ref{lem1}.
\end{proof}
By the classical Vitali Theorem \cite{EG} if $\mathcal{V}$ is a Vitali covering
for a set $A \subset \mathbb{R}^m$, then there is in $\mathcal{V}$
a sequence of pairwise disjoint elements whose union exhausts all
of $A$ but a Lebesgue null set.
The next result is a reformulation of Vitali Covering Theorem in our context.
\begin{proposition}\label{pro22}
Under the above assumptions, for any open set $U$ of $M$, there exists a finite or countably infinite disjoint subcollection $\{M_j\}\subseteq \mathcal{V}$ such that $\lambda(U\Delta \cup_{j}M_j)=0$. %In particular,b the subcollection may be chosen so that $R_j \subset B$ for each $j$.
\end{proposition}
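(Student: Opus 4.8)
The plan is to derive Proposition \ref{pro22} from the Vitali Covering Theorem recalled just above, applied not to $\mathcal{V}$ but to the subfamily of those closed dynamical balls that are already contained in $U$; the role of $U$ being open is precisely to guarantee that this subfamily is still rich enough to be a Vitali covering of $U$.

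First I would set $\mathcal{V}_U := \{\, M_{n,j,w} \in \mathcal{V} : M_{n,j,w} \subseteq U \,\}$ and check that $\mathcal{V}_U$ is a Vitali covering of $U$ in the sense of Proposition \ref{pro1}. Condition (3) is inherited verbatim from $\mathcal{V}$, with the very same constant $K_1$. For conditions (1) and (2), fix $x \in U$ and put $\delta_0 := d(x, M \setminus U) > 0$ (read $\delta_0 = +\infty$ when $U = M$). Given any target scale $\delta' > 0$, choose $0 < \delta < \min\{\delta_0, \delta'\}$; by Proposition \ref{pro1}(2) there is $M_{n,j,w} \in \mathcal{V}$ with $x \in M_{n,j,w}$ and $\diam(M_{n,j,w}) < \delta$. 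Since $x \in M_{n,j,w}$, every point of $M_{n,j,w}$ lies within distance $\diam(M_{n,j,w}) < \delta_0$ of $x$, so $M_{n,j,w} \subseteq U$, i.e. $M_{n,j,w} \in \mathcal{V}_U$. This single observation delivers both that $\mathcal{V}_U$ covers $U$ and that it is fine at every point of $U$ at arbitrarily small scales.

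Next I would apply the Vitali Covering Theorem to the Vitali covering $\mathcal{V}_U$ of $U$ to extract a finite or countably infinite pairwise disjoint subcollection $\{M_j\} \subseteq \mathcal{V}_U$ with $\lambda\big(U \setminus \bigcup_j M_j\big) = 0$. Here the cleanest route is to note that $(M, d, \lambda)$ is a doubling metric measure space (the compact $M$ has bounded geometry) and that every $M_{n,j,w}$ is uniformly comparable to a metric ball — being the closure of the image of a ball under a conformal map of distortion at most $K$ by Lemma \ref{lem1}, it contains a ball of radius at least $c\,\diam(M_{n,j,w})$ for a constant $c > 0$ independent of $n,j,w$ — so that the Vitali covering theorem in metric measure spaces applies directly. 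If one prefers to invoke the Euclidean statement cited above, one covers $M$ by a finite atlas, arranges (as in the previous paragraph, now also shrinking $\delta$ below $d(x, M \setminus W_\alpha)$) that the selected elements sit inside single charts, transports the covering to $\mathbb{R}^d$ with only a bounded change of the constant in (3), and patches the outcomes of the theorem over the charts; this last step is routine but is the one place where genuine attention is required, because of the usual bookkeeping about the boundaries of the chart domains.

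Finally, since every $M_j$ was chosen inside $U$, we have $\bigcup_j M_j \subseteq U$, hence $U \,\Delta\, \bigcup_j M_j = U \setminus \bigcup_j M_j$, and therefore $\lambda\big(U \,\Delta\, \bigcup_j M_j\big) = 0$, which is exactly the assertion of Proposition \ref{pro22}. Thus the substance of the argument is the disjointification provided by the classical theorem; the only new inputs are that $\mathcal{V}_U$ remains a Vitali covering of $U$, together with the regularity of the dynamical balls furnished by Proposition \ref{pro1}(3) and the bounded distortion of Lemma \ref{lem1}, which are what legitimise the passage from the manifold to the setting in which Vitali's theorem is available.
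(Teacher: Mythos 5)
Your argument is correct and is essentially the route the paper intends: the paper offers no proof of Proposition~\ref{pro22}, presenting it as an immediate reformulation of the classical Vitali Covering Theorem via the Vitali covering property established in Proposition~\ref{pro1}. Your restriction to the subfamily $\mathcal{V}_U$ of dynamical balls contained in $U$ (using openness of $U$ and fineness from Proposition~\ref{pro1}(2)) is exactly the standard way to make that deduction precise, and it correctly reduces the symmetric difference to $U\setminus\bigcup_j M_j$.
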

Now we show that the semigroup $\Gamma$ admits a countable Markov partition with the finite images and finite cycle properties if it is topologically mixing.
\begin{theorem}\label{cmarkov}
Every topologically mixing locally conformal expanding semigroup $\Gamma$ satisfying conditions $\normalfont{\textbf{(C1)}}$ and $\normalfont{\textbf{(C2)}}$
admits a countable Markov partition with \normalfont{FIP} and \normalfont{FCP}.
\end{theorem}
\begin{proof}
The collections $\mathcal{V}$ and $\mathcal{V}_\ell$, provided by Proposition \ref{pro1}, are Vitali coverings of $M$.
Take $\ell$ large enough so that $\text{diam}(\mathcal{V}_\ell)<\beta$, where $\beta$ is the Lebesgue number of the covering $\mathcal{B}$.
Since $\Gamma$ is topologically mixing, hence for each $j=1, \ldots, N$, there is a closed dynamical ball $M_{n_j,i_j,w_j} \in \mathcal{V}_\ell$, with $n_j \in \mathbb{N}$,
$i_j \in \{1, \ldots, N\}$ and $w_j \in AD$, such that
\begin{equation}\label{b22}
M_{n_j,i_j,w_j} \subset B_j, \ \ f_{w_j}^{n_j}(M_{n_j,i_j,w_j}^\circ)=B_{j+1}, \ \text{and} \ f_{w_N}^{n_N}(M_{n_N,i_N,w_N}^\circ)=B_{1}.
\end{equation}
Put $U:=M \setminus \bigcup_{B_i\in\mathcal{B}} \partial B_i$ and take $\widetilde{U}=U \setminus \bigcup_{j=1}^N M_{n_j,i_j,w_j}$ which are both open subsets of $M$. Applying Proposition \ref{pro22} to $\widetilde{U}$, one can obtain a countably infinite subfamily $\widetilde{\mathcal{M}}=\{M_i: i\in I\}\subseteq \mathcal{V}_\ell$, for some countable index set $I$, of disjoint closed dynamical balls such that $\lambda(\widetilde{U}\triangle \bigcup_{i \in I}M_i)=0$. Moreover, the elements of $\widetilde{\mathcal{M}}$ are contained in $\widetilde{U}$. In particular, for each $i \in I$, $M_i^\circ \cap \partial B_i=\emptyset$ and $M_i^\circ \cap M_{n_j,i_j,w_j}=\emptyset$.
%do not intersect the boundary $\partial B_i$ and the closed dynamical balls $M_{n_j,i_j,w_j}$.
Note that for each $M_i \in \widetilde{\mathcal{M}}$, there exists some $n_i \in \mathbb{N}$, $j_i \in \{1, \ldots, N\}$ and $w_i \in AD$ in such a way that $M_i=M_{n_i,j_i,w_i}$
and $f_{w_i}^{n_i}(M_i^\circ)=B_{j_i}$. For simplicity, put $h_i=f_{w_i}^{n_i}$.
Take $\widetilde{\mathcal{H}}:=\{h_{i}:i \in I\}\subset \Gamma$ corresponding to $\widetilde{\mathcal{M}}$. Clearly, $h_i(M_i^\circ)\in \mathcal{B}$.
%, hence $(\mathcal{M},\mathcal{H})$ satisfies the finite images property.
It is easily seen that the collection $\mathcal{M}=\widetilde{\mathcal{M}}\cup \{M_{n_j,i_j,w_j}: j=1, \ldots,N\}$ together with $\mathcal{H}=\widetilde{\mathcal{H}}\cup \{ f_{w_j}^{n_j}: j=1, \ldots, N\}$ is a Markov partition for $\Gamma$ with the finite images and finite cycle properties, where $M_{n_j,i_j,w_j}$ and $f_{w_j}^{n_j}$ given by (\ref{b22}).
Indeed, suppose that $h_j(M_j^\circ)\cap M_i^\circ\neq\emptyset$. Since $h_j(M_j^\circ)\in\mathcal{B}$ and $M_i$ doesn't intersect $\bigcup_{B_i\in\mathcal{B}}\partial B_i$, one has that $M_i\subset h_j(M_j)$, that is $(\mathcal{M},\mathcal{H})$ satisfies the Markovian property.
Moreover, for each $i$,
since $\mathcal{B}$ is a cover of $M$ and $\text{diam}(M_i)$ is less than the Lebesgue number of $\mathcal{B}$, hence there exists $j\in \{1, \ldots, N\}$with $M_i \subset B_{j+1}$, furthermore, by (\ref{b22}), $f_{w_j}^{n_j}(M_{n_j,i_j,w_j}^\circ)=B_{j+1}$ and hence $f_{w_j}^{n_j}(M_{n_j,i_j,w_j})\supset M_i$. Also, $h_i(M_i^\circ)=B_j$, for some $B_j \in \mathcal{B}$ and therefore, $h_i(M_i)\supset M_{n_j,i_j,w_j}$. Thus the Markov partition $(\mathcal{M},\mathcal{H})$ satisfies FIP and FCP. Since $\mathcal{M}$ and $\mathcal{H}$ are countable, they can be reindexed with the positive integers, giving us the positive integers $b_j$, $j=1, \ldots, N$, for which $M_{b_j}:=M_{n_j,i_j,w_j}\in \mathcal{M}$, $h_{b_j}:=f_{w_j}^{n_j} \in \mathcal{H}$ and $h_{b_j}(M_{b_j}^\circ)=B_{j+1}$, for $j=1, \ldots, N-1$ and $h_{b_N}(M_{b_N}^\circ)=B_{1}$. This finishes the proof.
\end{proof}

\vspace{-.3cm}
\begin{figure}[h!]
\begin{center}
% $\begin{array}{cc }
\includegraphics[scale=0.5]{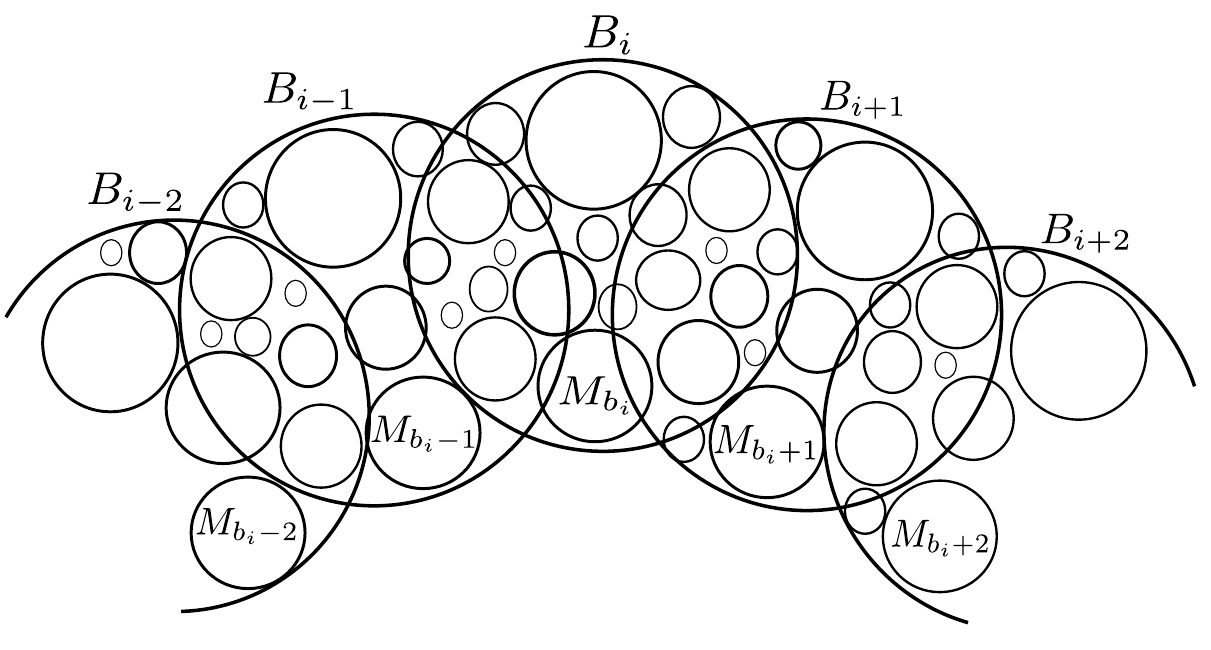}
% \includegraphics[scale=0.4]{fixpoint(b)}
%\end{array}$
\end{center}
\caption{ A countable Markov partition with FIP and FCP}
\label{fig:1}
\end{figure}
Theorem A is an immediate consequence of Theorem \ref{cmarkov}.
\subsection{Bounded distortion property and ergodicity of induced maps}\label{section4}
Fix any locally conformal topologically mixing semigroup $\Gamma$ of $C^{1+\alpha}$ conformal (local) diffeomorphisms on a compact connected manifold $M$.
By Theorem \ref{cmarkov}, the semigroup $\Gamma$ admits a countable Markov partition $(\mathcal{M}, \mathcal{H})=\{(M_{i}, h_i)\}_{i\in\mathbb{N}}$ with the finite images and the finite cycle properties.
In particular, $h_i(M_i^\circ)\in \mathcal{B}=\{B_1, \ldots, B_N\}$.
In the rest, we fix elements $M_{b_j}\in \mathcal{M}$ with the corresponding maps $h_{b_j}$, $j=1, \ldots, N$, such that $h_{b_j}(M_{b_j}^\circ)\in \mathcal{B}$ and they satisfy the finite cycle property. This means that for indices $b_j \in \mathbb{N}$, $j=1, \ldots, N$, we have
%\begin{equation*}\label{bip}
$$h_{b_j}(M_{b_j}^\circ)=B_{j+1}, \ \text{for} \ j=1, \ldots, N-1 \ \text{and} \ h_{b_N}(M_{b_N}^\circ)=B_{1}.$$
%\end{equation*}
Let us take the induced map $F :W \to M$ given by (\ref{ind}).
We will obtain a bounded distortion formulation for the induced map $F$.
\begin{remark}\label{exact}
The following two facts hold.
\begin{enumerate}
\item [(i)] By FIP and FCP, for each $i=1, \ldots, N$, one has that
$F^{N}(M_{b_i}^\circ)\supseteq W:=\bigcup_{\ell=1}^\infty M_\ell^\circ$. In particular, $F$ is topologically mixing.
%Notice that the subset $\bigcup_{\ell=1}^\infty \text{int}(M_\ell)$ is an open and dense subset of $M$ with the full Lebesgue measure.
\item [(ii)]
Each $M_j \in \mathcal{M}$ is a closed dynamical $n$-ball, for some $n \in \mathbb{N}$. Define inductively
closed dynamical $(n+n_{i_1}+ \cdots + n_{i_\ell})$-balls for $\ell \in \mathbb{N}$ as follows.
$M_{i_0}:=M_j$, $h_{i_0}:=h_j$, and for any $\ell \geq 1$, let
\begin{equation}\label{cyl}
M_{i_0,\ldots,i_\ell}:=h_{i_\ell}^{-1}(M_{i_0,\ldots,i_{\ell-1}}) \ \text{with} \ M_{i_0,\ldots,i_{\ell-1}}\subset h_{i_\ell}(M_{i_\ell}),
\end{equation}
where $n_{i_j}$ is the length of $h_{i_j}$.
Hence, we have the following chain of expanding maps:
\begin{equation*}\label{chain2}
M_{i_0,\ldots,i_\ell}\xrightarrow{h_{i_\ell}} M_{i_0,\ldots,i_{\ell-1}}\cdots\xrightarrow{h_{i_{0}}}h_{i_0}(M_{i_0})
\end{equation*}
with $h_{i_0}(M_{i_0}^\circ)\in\mathcal{B}$.
In particular,
%\begin{equation}\label{expanding}
$\text{diam}(M_{i_0,\ldots,i_\ell})\leq \sigma^{n_{i_\ell}} \text{diam}(M_{i_0,\ldots,i_{\ell-1}})$.
%\end{equation}
\end{enumerate}
\end{remark}
\begin{lemma}[Bounded Distorsion]\label{lem11}
There exists $K_1>1$ such that for any dynamical ball $M_{i_0,\ldots,i_\ell}$ given by (\ref{cyl}) in Remark \ref{exact} and for each $y,z \in M_{i_0,\ldots,i_\ell}^\circ$ one has
%\begin{equation*}\label{bdd1}
\begin{equation*}
K_1^{-1} \leq \frac{|\text{det} (DF^j(y))|}{|\normalfont{det}(DF^j(z))|} \leq K_1, \ j=0, \ldots, \ell+1.
\end{equation*}
\end{lemma}
\begin{proof}
By property (ii) of Remark \ref{exact}, for each dynamical ball $M_{i_0,\ldots,i_\ell}$ we have
%\begin{equation*}\label{seq}
$$h_{i_0}\circ h_{i_{1}}\circ\cdots\circ h_{i_\ell}(M_{i_0,\ldots,i_\ell}^\circ)=B_{m}\in\mathcal{B}, \ \text{for \ some} \ m \in \{1, \ldots, N\}$$
%\end{equation*}
and hence $F^{\ell+1}(M_{i_0,\ldots,i_\ell}^\circ)=B_m$.
In particular,
%\begin{equation*}\label{b1}
$M_{i_0,\ldots,i_\ell}^\circ=h_{i_\ell}^{-1}\circ \cdots \circ h_{i_0}^{-1}(B_{m})$.
%\end{equation*}
Moreover, for any $x \in M_{i_0,\ldots,i_\ell}^\circ$ and any $1 \leq j \leq \ell$ one has that
%\begin{equation*}\label{b2}
$h_{i_{\ell -j}}\circ \cdots \circ h_{i_\ell}(x) \in h_{i_{\ell -j-1}}^{-1}\circ \cdots \circ h_{i_0}^{-1}(B_{m})$.
%\end{equation*}
Also, by construction and the proof of Proposition \ref{pro1}, for each $0 \leq j \leq \ell$, there exists an admissible finite word $w_j$ of the alphabets $\{1, \ldots, k\}$ with the length $n_j$ such that $h_{i_j}=f_{w_j}^{n_j}$. Take $n=n_0+ \cdots+ n_\ell$. Then there is an admissible
finite word $w=(j_1, \ldots, j_n)$ of the alphabets $\{1, \ldots, k\}$ with the length $n$ such that $ h_{i_0}\circ h_{i_{1}}\circ\cdots\circ h_{i_\ell}=f_w^n$, where $f_w^n=f_{j_1}\circ \cdots \circ f_{j_n}$.
Finally, by construction, for each $1 \leq t \leq n$, one has that
\begin{equation*}\label{cylinder}
\text{diam}(f_{j_{n -t}}^{-1}\circ \cdots \circ f_{j_1}^{-1}(B_m)\leq \sigma \text{diam}( f_{j_{n -t-1}}^{-1}\circ \cdots \circ f_{j_1}^{-1}(B_m),
\end{equation*}
where $0 <\sigma <1$ is given by condition $(\normalfont{\textbf{C1}})$. By assumption, the mapping $g_i:=\log \mid det Df_i|_{V_i}\mid$, for all $1\leq i \leq k$, is $\alpha$-H\emph{$\ddot{o}$}lder and thus for all $z,y\in V_i$, it holds that
%\begin{equation*}
$\mid g_i(z)-g_i(y)\mid\leq C_0d(z,y)^\alpha$,
%\end{equation*}
for some constants $C_0>0$ and $\alpha>0$ (independent of $i$). These implies that for each $z,y \in M_{i_0,\ldots,i_\ell}^\circ$ we have
%\begin{equation*}\label{e55}
$$d(f_w^{t}(z),f_w^{t}(y))\leq \sigma^{n-t} \text{diam}(\mathcal{B})\leq 2\varepsilon \sigma^{n-t},$$
%\end{equation*}
where $f_w^t=f_{j_1}\circ \cdots \circ f_{j_t}$, $1\leq t\leq n$.
Therefore
\begin{align*}
\log\dfrac{\mid det (Df^t_w(z))\mid}{\mid det  (Df^t_w(y))\mid} = \sum_{k=1}^{t}\mid g_{j_{t-k +1}}(f_w^{k-1}(z))-g_{j_{t-k +1}}(f_w^{k-1}(y))\mid
\leq & \sum_{k=1}^{t}C_0 d(f^{k-1}_w(z),f^{k-1}_w(y))^\alpha \\
\leq & C_0 (2\varepsilon)^\alpha \sum_{k=1}^{t}(\sigma^{\alpha(n-t)})\\
\leq & C_0(2\varepsilon)^\alpha\sum_{k=0}^{\infty}\sigma^{k \alpha}.
\end{align*}
Take $K_1=\exp(C_0(2\varepsilon)^\alpha\sum_{k=0}^{\infty}\sigma^{k \alpha}$. Then by this choice of $K_1$, the inequality of the lemma holds.
\end{proof}
%%%%%%%%%%%%%%%%%%%%%%%%%%%%%%%%%%%
%\begin{definition}\label{inverse}
%The following two facts hold.
%\begin{enumerate}
Let $h_j(M_j^\circ)=B_i$, for some $B_i \in \mathcal{B}$. If $M_k^\circ \subset B_i$ then $h_j^{-1}(M_k^\circ)\subset M_j^\circ$. We say that $h_j^{-1}|_{M_k^\circ} : M_k^\circ \to h_j^{-1}(M_k^\circ)\subset M_j^\circ$
is an {\it inverse branch} of $F$. More generally, for any $n \geq 1$, we call an inverse branch of $F^n$ a composition
$$h_{i_n}^{-1}\circ \cdots \circ h_{i_1}^{-1}|_{M_k^\circ}:M_k^\circ \to h_{i_n}^{-1}\circ \cdots \circ h_{i_1}^{-1}(M_k^\circ)\subset M_{i_n}^\circ,$$ where $(i_1, \ldots, i_n)$
is an admissible word, $M_k \in \mathcal{M}$
with $h_{i_1}^{-1}(M_k^\circ)\subset M_{i_1}^\circ$ and $h_{i_j}^{-1}\circ \cdots \circ h_{i_1}^{-1}(M_k^\circ)\subset M_{i_{j}}^\circ$, for $1\leq j \leq n$.

%Take $ \widetilde{W}:=W \setminus \bigcup_{k \geq 0}F^{-k}(M \setminus W)$, where $W=\bigcup_{i \in \mathbb{N}}\text{int}(M_i)$ and $F^{-k}(M \setminus W)$ is the union of all inverse branches of $F^k$ on $M \setminus W$.
%Then, $\lambda(\widetilde{W})=1$.
%\item [ii)] Clearly, for each $x \in \widetilde{W}$ there is a sequence $\{M_{i_0,\ldots,i_\ell}\}$ of dynamical balls given by part ($ii$) of Remark \ref{exact} such that $x \in \text{int}( M_{i_0,\ldots,i_\ell})$. For simplicity we denote these dynamical balls by $\mathcal{M}_\ell(x)=M_{i_0,\ldots,i_\ell}$.
%\end{enumerate}
%\end{definition}
\begin{proposition}\label{pro222}
There are $C_0 > 0$ and a $F$-invariant absolutely continuous probability measure $\mu_F=\rho_F \lambda$ with $C_0^{-1}\leq \rho_F \leq C_0$, where $\lambda$ denotes the normalized Lebesgue measure. In particular, $\mu_F$ is ergodic.
\end{proposition}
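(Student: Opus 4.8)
The plan is to realize $\mu_T$ as a fixed density of the Ruelle--Perron--Frobenius operator of $T$ with respect to $\lambda$, the two-sided bound on the density and ergodicity coming from bounded distortion (Lemma~\ref{lem11}), uniform expansion, and the covering properties built into Theorem~A and recorded in Remark~\ref{exact}. First I would introduce the operator
\[
(\mathcal{L}\varphi)(x):=\sum_{y\in T^{-1}(x)}\frac{\varphi(y)}{J_T(y)},\qquad J_T:=|\det DT|,
\]
the sum being over the at most countably many inverse branches $h_i^{-1}$; it is well defined on $L^\infty(\lambda)$ because each $T|_{\mathrm{int}(M_i)}$ is a diffeomorphism onto an element of the finite family $\mathcal{B}=\{B_1,\dots,B_N\}$ (condition (H2)), and the change of variables formula gives $\int\mathcal{L}\varphi\,d\lambda=\int\varphi\,d\lambda$. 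Iterating the one-to-one correspondence between branches of $T^n$ and dynamical balls $C=M_{i_0,\dots,i_{n-1}}$ from Remark~\ref{exact}(ii), bounded distortion (Lemma~\ref{lem11}) yields for the preimage $y_C\in C$ of $x\in T^n(C)=B_m$ the estimate $K_1^{-1}\lambda(C)/\lambda(B_m)\le 1/J_{T^n}(y_C)\le K_1\lambda(C)/\lambda(B_m)$; summing over all such $C$, whose interiors are pairwise disjoint of total measure at most $1$, and using $\min_m\lambda(B_m)>0$, gives the uniform upper bound $\mathcal{L}^n\mathbf 1\le C_0$ with $C_0=K_1/\min_m\lambda(B_m)$. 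The Ces\`aro averages $\rho_n:=\tfrac1n\sum_{k=0}^{n-1}\mathcal{L}^k\mathbf 1$ are then bounded in $L^\infty(\lambda)=(L^1(\lambda))^\ast$, so a weak-$\ast$ subsequential limit $\rho_T\ge 0$ exists with $\int\rho_T\,d\lambda=1$, $\rho_T\le C_0$, and $\mathcal{L}\rho_T=\rho_T$ (because $\mathcal{L}\rho_n-\rho_n=\tfrac1n(\mathcal{L}^n\mathbf 1-\mathbf 1)\to 0$). Hence $\mu_T:=\rho_T\lambda$ is a $T$-invariant absolutely continuous probability measure. Moreover each $\mathcal{L}^n\mathbf 1$, and so $\rho_T$, is comparable to a constant on every $\mathrm{int}(M_i)$ with a factor independent of $i$: each $M_i$ lies in a single element of $\mathcal{B}$, so points of the same $M_i$ have the same set of $T^n$-inverse branches, and the corresponding Jacobians agree up to a factor $K_1$ by Lemma~\ref{lem11}.

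For the lower bound I would first note $\rho_T>0$ $\lambda$-a.e.: if $\rho_T$ is positive on a positive-measure subset of some $\mathrm{int}(M_i)$, the comparability just stated makes $\rho_T>0$ a.e.\ on $\mathrm{int}(M_i)$, hence (using $\rho_T=\mathcal{L}\rho_T$ and $T(\mathrm{int}(M_i))\in\mathcal{B}$) a.e.\ on that $B_j$, and running the finite cycle of Theorem~A one reaches all of $B_1,\dots,B_N$, so $\rho_T>0$ a.e.\ on $M$; in particular $\mu_T(M_{b_1})>0$. Now I would apply $\rho_T=\mathcal{L}^N\rho_T$, keeping only the $T^N$-preimages lying in $\mathrm{int}(M_{b_1})$: by Remark~\ref{exact}(i) one has $T^N(\mathrm{int}(M_{b_1}))\supseteq\bigcup_\ell\mathrm{int}(M_\ell)$, so for $\lambda$-a.e.\ $x$ there is $B_m\ni x$ which is the $T^N$-image of a positive-measure union $A_m$ of dynamical $N$-balls contained in $\mathrm{int}(M_{b_1})$. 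The distortion estimate above together with the comparability of $\rho_T$ on each $M_i$ gives $\rho_T(x)\ge c_1\int_{A_m}\rho_T\,d\lambda$ for a constant $c_1>0$, and a further use of comparability on $\mathrm{int}(M_{b_1})$ bounds the right-hand side below by a positive multiple of $\mu_T(M_{b_1})\,\min_m\lambda(A_m)/\lambda(M_{b_1})$, independent of $x$. Enlarging $C_0$, this gives $C_0^{-1}\le\rho_T\le C_0$.

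For ergodicity, let $A$ be $T$-invariant with $\lambda(A)>0$ and let $x$ be a Lebesgue density point of $A$. The dynamical balls $\mathcal{M}_\ell(x)$ (Remark~\ref{inverse}(ii)) shrink to $x$ by (H4) and are round in the sense of Proposition~\ref{pro1}(3), so the relative $\lambda$-density of $A$ in $\mathcal{M}_\ell(x)$ tends to $1$; pushing forward by $T^{\,\ell+1}$, which maps $\mathcal{M}_\ell(x)$ diffeomorphically onto some $B_m\in\mathcal{B}$ with distortion at most $K_1$ (Lemma~\ref{lem11}), forces $\lambda(B_m\setminus A)=0$. Then $A$ contains $B_m\cap W$, hence some $\mathrm{int}(M_\ell)$, modulo $\lambda$; invariance of $A$ and the covering property $T^N(\mathrm{int}(M_{b_i}))\supseteq\bigcup_\ell\mathrm{int}(M_\ell)$ (Remark~\ref{exact}(i)) spread $A$ over all of $W$ modulo $\lambda$, so $\lambda(A)=1$. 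Since $\mu_T\sim\lambda$, the measure $\mu_T$ is ergodic.

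The step I expect to be the main obstacle is the lower bound: because the alphabet is countably infinite, $\mathcal{L}^n\mathbf 1(x)$ is an infinite series over inverse branches, and bounding it away from $0$ uniformly in $x$ requires exhibiting a definite amount of preimage mass with controlled Jacobian. This is exactly where the finite images property (only finitely many possible images $B_m$), the finite cycle property, and their combined consequence Remark~\ref{exact}(i) are indispensable; without finiteness of the image family one would in general only obtain a $\sigma$-finite absolutely continuous invariant measure rather than a probability with bounded density.
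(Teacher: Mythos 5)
Your construction is, at its core, the same as the paper's: the densities $\mathcal{L}^k\mathbf 1$ you iterate are exactly the densities of the push-forwards $T^k_*\lambda$, so your Ces\`aro averages coincide with the measures $\mu_n=\frac1n\sum_{i<n}T^i_*\lambda$ used in the paper, and both arguments rest on Lemma~\ref{lem11} plus the finiteness of the image family $\mathcal{B}$. The real divergence is in how the two sides of the bound $C_0^{-1}\le\rho_T\le C_0$ are obtained. The paper proves a two-sided estimate valid for \emph{every} iterate, namely (\ref{bdd2}) and then the claim (\ref{bdd3}) $C_0^{-1}\lambda(A)\le\lambda(T^{-\ell}(A))\le C_0\lambda(A)$, so each $\rho_n$ is already pinched between $C_0^{-1}$ and $C_0$ and the bound passes trivially to the limit; ergodicity is then obtained by citing Remark~\ref{exact}(i) and the argument of Young. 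You instead prove only the uniform upper bound on $\mathcal{L}^n\mathbf 1$, and recover the lower bound a posteriori for the fixed density via $\rho_T=\mathcal{L}^N\rho_T$, positivity propagation along the finite cycle, and Remark~\ref{exact}(i); your ergodicity proof is a self-contained Lebesgue-density-point argument using Proposition~\ref{pro1}(3) and the distortion bound. Your route is somewhat longer but arguably more careful on the lower bound, which in the paper is dispatched in one sentence; conversely the paper's (\ref{bdd3}) gives the stronger statement that all the approximating densities, not just the limit, are bounded below.

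One link in your argument is not justified as written: the claim that points of the same $M_i$ see the same set of $T^n$-inverse branches ``because each $M_i$ lies in a single element of $\mathcal{B}$.'' Since $\mathcal{B}$ is an overlapping cover, containment in one ball does not prevent two points of $M_i$ from lying in different subfamilies of $\{B_1,\dots,B_N\}$ and hence from having different branch sets. What saves the statement, for the elements of $\widetilde{\mathcal{M}}$, is the construction in Theorem~A: $\mathrm{int}(M_i)$ is connected and disjoint from $\bigcup_j\partial B_j$, so it lies either inside $B_j$ or outside $\overline{B_j}$ for each $j$, and all its points do have identical branch sets; comparability then follows from Lemma~\ref{lem11}. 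This is only guaranteed for the non-special elements, whereas at the end you invoke comparability on $\mathrm{int}(M_{b_1})$, a special element which the construction does not prevent from crossing boundaries $\partial B_j$, $j\neq 1$. The gap is easily repaired --- run the preimage argument through a fixed non-special element $M_{i^\ast}$ of positive $\mu_T$-measure, using $T^{N+1}(\mathrm{int}(M_{i^\ast}))\supseteq W$ (one step into some $B_j\supset M_{b_j}$ followed by Remark~\ref{exact}(i)) --- but as stated this step needs that correction.
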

\begin{proof}
For each $x \in W$, take $\normalfont{J}_F(x)=|det  DF(x)|$.
By Lemma \ref{lem11}, we have the following inequality: for each $y,z \in M_{i_0,\ldots,i_{\ell -1}}^\circ$,
%\begin{equation}\label{eq1}
$$K_1^{-1}\leq \frac{J_{F^\ell}(y)}{J_{F^\ell}(z)} \leq K_1.$$
%\end{equation}
By the change of variable formula, there is a
constant $K_2 > 0$ such that for any dynamical ball $M_{i_0,\ldots,i_{\ell -1}}$ given by (\ref{cyl}), with $F^{\ell}(M_{i_0,\ldots,i_{\ell -1}}^\circ)=B_m$, for some $B_m \in \mathcal{B}$, if $A_1, A_2 \subset B_m$ are two measurable subsets then
\begin{equation}\label{bdd2}
K_2^{-1}\frac{\lambda(A_1)}{\lambda(A_2)} \leq \frac{\lambda(F^{-\ell}(A_1))}{\lambda(F^{-\ell}(A_2))} \leq K_2 \frac{\lambda(A_1)}{\lambda(A_2)},
\end{equation}
where $F^{-\ell}$ is the corresponding inverse branch defined above. % Indeed, fixing $x \in M_{i_0,\ldots,i_{\ell -1}}^\circ$, it comes out that
%\begin{equation*}\label{jj}
%\frac{\int_{F^{-\ell}(A_1)}\mathcal{J}_{F^\ell}d\lambda}{\int_{F^{-\ell}(A_2)}\mathcal{J}_{F^\ell}d\lambda}\leq K_1^2 %\frac{\mathcal{J}_{F^\ell}(x)\lambda(F^{-\ell}(A_1))}{\mathcal{J}_{F^\ell}(x)\lambda(F^{-\ell}(A_2))},
%\end{equation*}
%and with the same argument we prove the other inequality of (\ref{bdd2}) with $K_1^2=K_2$.
For each $\ell \in \mathbb{N}$, consider the collection $\mathcal{W}_\ell$ composed of all dynamical balls $M_{i_0,\ldots,i_{\ell -1}}$ given by (\ref{cyl}). Clearly $\mathcal{W}_\ell$
is a countable partition of $W$. First, we claim that
there exists $C_0 >0$ such that for each $\ell \in \mathbb{N}$ and any measurable subset $A$ of $W$ one has that
\begin{equation}\label{bdd3}
C_0^{-1}\lambda(A) \leq \lambda(F^{-\ell}(A)) \leq C_0 \lambda(A).
\end{equation}
Actually, for any inverse branch as above, set $A_2=B_m$ and $A_1=A \cap M_k$, for each $M_k \subset B_m$. Then
$\lambda(F^{-\ell} (A))$ is the sum of the terms $\lambda(F^{-\ell}(A \cap M_k))$, with $M_k \subset B_m$ and $B_m \in \mathcal{B}$, over all
inverse branches of $F^\ell$. %, see Remark \ref{exact} and Definition \ref{inverse}.
The claim now follows from (\ref{bdd2}). As a classical method, using (\ref{bdd3}) we get that every accumulation point $\mu_F$ of the sequence
\begin{equation*}\label{mea}
\mu_n =\frac{1}{n}\sum_{i=0}^{n-1}F^i_{\ast}\lambda
\end{equation*}
is a $F$-invariant probability absolutely continuous with respect to $\lambda$, with density
$\rho_F$ bounded from zero and from infinity. Indeed, by (\ref{bdd3}), for every $n$, the density $\rho_n = d \mu_n/d \lambda$ satisfies $C_0^{-1} \leq \rho_n \leq C_0$
and the same holds for the density of the accumulation point $\mu_F$ (see Theorem 1.3 in \cite{Man}). Additionally, by part (i) of Remark \ref{exact} and absolute continuity of $\mu_F$, the measure $\mu_F$ is ergodic, see proof of Theorem 1 in \cite{Ya} for more details.
\end{proof}
\section{Markov towers and Thermodynamics of two-sided countable shifts}\label{section5}
This section is devoted to exhibiting an inducing
scheme of hyperbolic type satisfying conditions (\textbf{H1})-(\textbf{H4}) for each locally conformal expanding $C^{1+\alpha}$ action with topological mixing  property. By utilization of the inducing
scheme we model semigroup actions by towers over the subshift of countable type on the
space of two-sided infinite sequences. Moreover we  establish a thermodynamic
formalism for this kind of countable subshifts.
\subsection{Markov towers for locally conformal semigroup actions}\label{section5}
Let $\Gamma$ be a locally conformal expanding $C^{1+\alpha}$ action with topological mixing  property a compact connected manifold $M$ and take
the Markov partition $(\mathcal{M}, \mathcal{H})=\{(M_{i}, h_i)\}_{i\in\mathbb{N}}$ with FIP and FCP given by Theorem \ref{cmarkov}.
Let $\Sigma_{\mathcal{A}}^{(+)}$ and $\Sigma_{\mathcal{A}}$ be one-sided and two-sided topological Markov chains
defined by
(\ref{chain3})% and (\ref{chain4}), respectively,
with the left shift maps, $\sigma_{\mathcal{A}}^{(+)}$.% and $\sigma_{\mathcal{A}}$.
The aim is to show that $\Gamma$ exhibits an inducing scheme
of hyperbolic type satisfying conditions (\textbf{H1})-(\textbf{H4}).
The next proposition verifies conditions (\textbf{H1}) and (\textbf{H2}).
\begin{proposition}\label{pros}
The following statements hold:
\begin{enumerate}
\item [(1)]
For each $M_i \in \mathcal{M}$, $h_i |_{M_i^\circ}$ can be extended to a homeomorphism of a neighborhood of $M_i$.
\item [(2)] For every bi-infinite sequence $\textbf{i}=(\ldots, i_{-1}, i_0, i_1, \ldots)\in\Sigma_{\mathcal{A}}$ there exists a unique
sequence $\underline{x}=\underline{x}(\textbf{i})=(x_n)_{n \in \mathbb{Z}}$ such that
\begin{enumerate}
\item [$(a)$] $x_n \in M_{i_n}$ and $h_{i_n}( x_n)=x_{n+1}$;
\item [$(b)$] if for some sequences $\underline{x}=\underline{x}(\textbf{i})$ and $\underline{y}=\underline{y}(\textbf{j})$,
one has $ x_n = y_n $ for all $n \leq 0$ then $(\ldots, i_{-1}, i_0)=(\ldots, j_{-1}, j_0)$.
\end{enumerate}
\end{enumerate}
%$\Gamma$ %admits an inducing scheme of hyperbolic type.
\end{proposition}
\begin{proof}
The proof follows directly from Theorem \ref{cmarkov}.
Indeed, let $|h_i|$ be the length of the admissible finite word $w_i$, $i \in \mathbb{N}$, so that $h_i=f_{w_i}^{|h_i|}$.
By definition, it is clear that $h_i |_{M_i^\circ}$ can be extended to a homeomorphism of a neighborhood of $M_i$ and statement (1) is verified. Let $$\textbf{i}:=(\ldots, i_{-n}, \ldots, i_{-1}, i_0, i_1, \ldots, i_n,\ldots)\in \Sigma_{\mathcal{A}}$$ and take the natural projections
%\begin{equation}\label{qn}
$$ q_n(\textbf{i}):=q_n(\ldots, i_{-n}, \ldots, i_{-1}, i_0, i_1, \ldots, i_n,\ldots)=(i_0, i_1, \ldots, i_n), \quad n\geq 0.$$
%\end{equation}
For the admissible word $q_n(\textbf{i})=(i_0, i_1, \ldots, i_n)$, take the cylinder set
\begin{equation}\label{cy}
C_{i_0,\ldots,i_{n}}=M_{i_0}\cap h_{i_0}^{-1}(M_{i_1})\cap \cdots \cap h_{i_0}^{-1} \circ h_{i_1}^{-1}\circ \cdots \circ h_{i_{n-1}}^{-1}(M_{i_n}).
\end{equation}
Then, by construction, for each $n \in \mathbb{N}$, $C_{i_0,\ldots,i_{n}}\subset C_{i_0,\ldots,i_{n-1}}$. Also,
%\begin{equation}\label{dd}
$$\text{diam}(C_{i_0,\ldots,i_n})\leq \sigma^{|h_{i_n}|} \text{diam}(C_{i_0,\ldots,i_{n-1}}).$$
%\end{equation}
Thus, the intersection $\bigcap_{n\geq 0}C_{i_0,\ldots,i_{n}}$ is a single point, denoted by $x_0$.
In particular, $x_0\in M_{i_0}$, $h_{i_0}(x_0)\in M_{i_1}$, and $h_{i_{n-1}}\circ \cdots \circ h_{i_0}(x_0)\in M_{i_n}$, for each $n \in \mathbb{N}$. Call $\textbf{i}=(\ldots, i_{-1}, i_0, i_1, \ldots)$ is called the \emph{itinerary} of $x$. Take $x_1=h_{i_0}(x_0)$ and $x_n=h_{i_{n-1}}\circ \cdots \circ h_{i_0}(x_0)$, for each $n \in \mathbb{N}$.
Let $x_{-1}=h_{i_{-1}}^{-1}(x_0)$ and inductively, we set $x_{-n}=h_{i_{-n}}^{-1}(x_{-n+1})$.
Then, for every bi-infinite sequence $\textbf{i}=(\ldots, i_{-1}, i_0, i_1, \ldots)\in\Sigma_{\mathcal{A}}$, we can assign a unique
sequence $\underline{x}=\underline{x}(\textbf{i})=(x_n)_{n \in \mathbb{Z}}$ as above. By construction, this sequence satisfies part ($a$) of statement (2).
Part($b$) is an immediate consequence of our construction.
\end{proof}
\begin{proposition}\label{pi}
The map $\pi$ given by (\ref{code}) has the following properties:
\begin{enumerate}
\item [(1)] $\pi$ is well defined, continuous and for all $\textbf{i} \in \Sigma_{\mathcal{A}}$ one has
$\pi \circ \sigma_{\mathcal{A}}(\textbf{i})=h_{i_0} \circ \pi(\textbf{i})$,
where $i_0$ is the zeroth digit in $\textbf{i}$,
\item [$(2)$] $\pi$ is one-to-one on $\check{\Sigma}$ and $\pi(\check{\Sigma})\subset W$, where $\check{\Sigma}$ is defined by (\ref{hat}) and $W=\bigcup_{M_i \in \mathcal{M}}M_i^\circ$,
\item [$(3)$] if $\pi(\textbf{i})=\pi(\textbf{j})$ for some $\textbf{i},\textbf{j}\in \check{\Sigma}$ then $i_n=j_n$ for all $n \geq 0$.
\end{enumerate}
\end{proposition}
\begin{proof}
Part (1) is clear by the definition of $\pi$. To prove (2), first note that $\pi$ is one-to-one on $\check{\Sigma}$.
Take
%\begin{equation}\label{delta}
$\Delta := \pi^{-1}(\partial \mathcal{M})$, where $\partial \mathcal{M}:= \bigcup_{M_i \in \mathcal{M}}M_i \setminus M_i^\circ$.
%\end{equation}
By definition of $\check{\Sigma}$, for any $\textbf{i} \in \Sigma_{\mathcal{A}}\setminus \check{\Sigma}$ there exist $k \in \mathbb{Z}$ such that $\pi \circ \sigma_{\mathcal{A}}^k(\textbf{i})\in M_{i_k}\setminus M_{i_k}^\circ$.
Hence, $\check{\Sigma}=\Sigma_{\mathcal{A}} \setminus \bigcup_{k\geq 0}\sigma_\mathcal{A}^{-k}(\Delta)$.
By this fact and property (1), we get $\sigma_{\mathcal{A}}(\check{\Sigma})\subset \check{\Sigma}$ and $\pi(\check{\Sigma})\subset W$ and so, property (2) is verified.
Property (3) follows immediately from the definitions of $\pi$ and $\check{\Sigma}$
and Proposition \ref{pros}.
\end{proof}
We set
\begin{equation}\label{w1}
\widetilde{W}:=\pi(\check{\Sigma}).
\end{equation}
%Note that $\sigma_{\mathcal{A}}(\check{\Sigma})\subset \check{\Sigma}$, hence, by this fact and the previous proposition, $F(W)\subset W$.
Condition (\textbf{H3}) can be followed by the next result.
\begin{proposition}\label{proin}
Let $\mu$ be an ergodic $\sigma_{\mathcal{A}}$-invariant measure which gives positive weight to any open subset. Then $\mu (\Sigma_{\mathcal{A}}\setminus \check{\Sigma})=0$
and the map $\pi$ given by (\ref{code}) is a continuous bi-measurable bijection map from $\check{\Sigma}$
to a (Lebesgue) full subset of $M$.
\end{proposition}
\begin{proof}
Let $\mu$ be an ergodic $\sigma_{\mathcal{A}}$-invariant measure which gives positive weight to any open subset and consider the map $\pi$ given by (\ref{code}).
By the proof of Proposition \ref{pi}, for $\Delta$ give above, \linebreak$\check{\Sigma}=\Sigma_{\mathcal{A}} \setminus \bigcup_{k\geq 0}\sigma_\mathcal{A}^{-k}(\Delta)$.
Thus, $\pi : \check{\Sigma} \to \widetilde{W}$
%\begin{equation*}\label{eq}
%\pi : \check{\Sigma} \to W, %\text{and}
%\end{equation*}
is bi-measurable, injective and also surjective to full Lebesgue measure $\widetilde{W}$. has full Lebesgue measure.
Also, Proposition \ref{pi} ensures that $\sigma_\mathcal{A}(\Delta)\subset \Delta$, i.e. $\Delta$ is forward invariant.
Since the measure $\mu$ is $\sigma_{\mathcal{A}}$-invariant, $\mu(\sigma_{\mathcal{A}}^n(\Delta))=\mu(\sigma_{\mathcal{A}}(\Delta))$. Hence, the set $\bigcap_{n\geq 0}\sigma_{\mathcal{A}}^n(\Delta)$ has $\mu$-measure 0 or 1 as $\mu$ is $\sigma_{\mathcal{A}}$-ergodic.
Since its complement is a nonempty open set and has positive measure by the fact that $\mu$ gives positive weight to
any open subset, one gets $\mu(\Delta)=0$. This completes the proof.
\end{proof}
By part (i) of Remark \ref{exact}, the induced map $F$ is topologically mixing, thus we get the next result. It verifies condition (\textbf{H4}).
\begin{proposition}\label{period}
The induced map $F$ has at least one periodic point in $W$.
\end{proposition}
Theorem B is now an immediate corollary of Propositions \ref{pros}, \ref{proin} and \ref{period}.
\begin{remark}
Proposition \ref{proin} ensures that every Gibbs measure of $\sigma_\mathcal{A}$ is
supported on $\check{\Sigma}$ and its projection by $\pi$ is thus supported on $\widetilde{W} \subset W$ and is $F$-invariant.
Proposition \ref{period} guarantees the finiteness of the pressure function.
\end{remark}
\subsection{Thermodynamics of two-sided countable shifts}\label{section6}
Locally conformal semigroup actions with inducing schemes are
modeled by towers over the subshift of countable type on the space of two-sided infinite
sequences. This motivate us to establish a thermodynamic formalism for this kind of countable subshifts.
Thermodynamic formalism for subshifts on the space of one-sided infinite
sequences over a countable alphabet provided by Sarig \cite{Sa1, Sa2}.
In this section, we extend this result to countable subshifts on the space of two-sided infinite sequences.
The main result in this section is Theorem \ref{mt} which
provides conditions on the potential functions that guarantee existence and uniqueness
of equilibrium measures and describes their ergodic properties.
We use these results in Subsections \ref{Section: 5.1} and \ref{section8} to establish the thermodynamic formalism for
locally conformal semigroup actions with inducing schemes of hyperbolic type.
Given a locally conformal semigroup $\Gamma$, take the Markov partition $(\mathcal{M}, \mathcal{H})=\{(M_{i}, h_i)\}_{i\in\mathbb{N}}$ having the finite images and the finite cycle properties.
Consider the elements $M_{b_j}\in \mathcal{M}$ with the corresponding maps $h_{b_j}$, $j=1, \ldots, N$, such  that $h_{b_j}(M_{b_j}^\circ)\in \mathcal{B}$ and they satisfy the finite cycle property. That is for indices $b_j \in \mathbb{N}$, $j=1, \ldots, N$, we have
%\begin{equation*}
$h_{b_j}(M_{b_j}^\circ)=B_{j+1}$, for $j=1, \ldots, N-1$ and  $h_{b_N}(M_{b_N}^\circ)=B_{1}$.
%\end{equation*}
%see the proof of Theorem \ref{thm11}.
Let us take the induced map $F :W \to M$ given by (\ref{ind}) and the two sided Markov chain $(\Sigma_{\mathcal{A}},\sigma_{\mathcal{A}})$. % as defined in (\ref{chain4}).

We recall preliminaries of thermodynamic formalism for countable subshifts.
For a potential $\varphi:\Sigma_{\mathcal{A}} \to \mathbb{R}$ we denote the $n$-th \emph{variation} of $\varphi$.
by
\begin{equation*}\label{pot}
\text{Var}_n(\varphi):=\sup\{|\varphi(\textbf{i})-\varphi(\textbf{j})|: \textbf{i},\textbf{j} \in \Sigma_{\mathcal{A}}, \ i_\ell=j_\ell, \ -n+1 \leq \ell \leq n-1\}
\end{equation*}
\begin{itemize}
  \item $\varphi$ has  \emph{summable variations} if $ \sum_{n=1}^\infty \text{Var}_n(\varphi)< \infty$;
  \item $\varphi$ has  \emph{strongly summable variations} if $\sum_{n=1}^\infty n \text{Var}_n(\varphi)< \infty$;
  \item $\varphi$ is \emph{locally H\emph{$\ddot{\emph{o}}$}lder continuous} if there exist $C > 0$ and $0 < \theta < 1$
  such that for all $n \geq 1$, $\text{Var}_n(\varphi)\leq C \theta^n$. Clearly, locally H\emph{$\ddot{\emph{o}}$}lder continuity guarantees summable variation.
\end{itemize}
%\begin{remark}\label{rem222}
%By definition, it is easily seen that if a potential $\varphi$ is locally H\emph{$\ddot{\emph{o}}$}lder continuous then
%it has summable variations.
%\end{remark}
%Now, let $\varphi$ be a potential for $ (\Sigma_{\mathcal{A}}^{+},\sigma_{\mathcal{A}})$.
For $\ell \in \mathbb{N}$, let
\begin{equation*}\label{z}
 Z_n(\varphi, \ell):=\sum_{\sigma_{\mathcal{A}}^n(\textbf{i})=\textbf{i}, i_0=\ell}\exp (\varphi_n(\textbf{i})),
\end{equation*}
where $\varphi_{n}(\textbf{i})=\sum_{k=0}^{n-1}\varphi \circ \sigma_{\mathcal{A}}^{k}(\textbf{i})$.
The \emph{Gurevic-Sarig} pressure of $\varphi$ is the number
%\begin{equation}\label{gur}
$$\mathcal{P}(\varphi):=\lim_{n \to \infty}\frac{1}{n} \log Z_n(\varphi, \ell).$$
%\end{equation}
This notion introduced by Gurevic in \cite{G1, G2} which is a generalization of the notion of topological entropy
$h_G(\sigma_{\mathcal{A}})$ for countable Markov chains, so that $\mathcal{P}(0) = h_G(\sigma_{\mathcal{A}})$.
Since the Gurevich pressure only depends on the positive side of the sequences,
$ \mathcal{P}(\varphi)$ exists whenever $\sum_{n=1}^\infty n \text{Var}_n(\varphi)< \infty$
 and it is independent of $\ell \in \mathbb{N}$ by Theorem 1 of \cite{Sa1}. A \emph{Gibbs\ measure} for $\varphi$ is an invariant probability measure $\mu_\varphi$ such that for some constant $B$ and every cylinder $[a_{0},\ldots , a_{n-1}]$,
$$\dfrac{1}{B}\leq\dfrac{\mu_\varphi([a_{0},\ldots,a_{n-1}])}{e^{\varphi_{n}(\textbf{i})-n \mathcal{P}(\varphi)}}\leq B,$$
for all $ \textbf{i}\in[a_{0},\ldots,a_{n-1}]$, where
$\varphi_{n}(\textbf{i})=\sum_{k=0}^{n-1}\varphi \circ \sigma_{\mathcal{A}}^{k}(\textbf{i})$. Let $\mathcal{M}(\sigma_{\mathcal{A}})$ denote the set of $\sigma_{\mathcal{A}}$-invariant Borel probabilities on $\Sigma_{\mathcal{A}}$ and
\begin{equation*}
\mathcal{M}_\varphi(\sigma_{\mathcal{A}}):=\{\nu \in \mathcal{M}(\sigma_{\mathcal{A}}): \int_{\Sigma_{\mathcal{A}}}\varphi d\nu > -\infty \}.
\end{equation*}
An \emph{equilibrium measure} is a $\sigma_{\mathcal{A}}$-invariant Borel probability
measure $\nu_\varphi$ for which the following holds:
\begin{equation*}\label{pr}
h_{\nu_\varphi}(\sigma_{\mathcal{A}}) + \int \varphi d\nu_\varphi=\sup_{\nu \in \mathcal{M}_\varphi(\sigma_{\mathcal{A}}) } \{h_\nu(\sigma_{\mathcal{A}}) + \int \varphi d\nu\}< \infty.
\end{equation*}
In the following, the thermodynamic formalism for countable shift $\sigma_{\mathcal{A}}$ on the space $\Sigma_{\mathcal{A}}$ of two-sided infinite sequences is established.
\begin{theorem}\label{mt}
Assume $\sup_{\textbf{i}\in \Sigma_{\mathcal{A}}} \varphi(\textbf{i}) < \infty$ and $\varphi$ has strongly summable variations. Then:
\begin{enumerate}
  \item [$(1)$] The variational principle for $\varphi$ holds, i.e.,
%\begin{equation*}\label{pr}
$\mathcal{P}(\varphi)=\sup_{\nu \in \mathcal{M}_\varphi(\sigma_{\mathcal{A}}) } \{h_\nu(\sigma_{\mathcal{A}}) + \int \varphi d\nu\}.$
%\end{equation*}
  \item [$(2)$] If $\mathcal{P}(\varphi)<\infty$ then there exists a unique $\sigma_{\mathcal{A}}$-invariant ergodic Gibbs measure $\nu_\varphi$ for $\varphi$.
  \item [$(3)$] If, furthermore, $h_{\nu_\varphi} (\sigma_{\mathcal{A}}) < \infty$, then $\nu_\varphi \in \mathcal{M}_\varphi(\sigma_{\mathcal{A}})$ and it is the unique equilibrium measure for $\varphi$.
\end{enumerate}
\end{theorem}
The proof of the theorem is based on the technique used in \cite{PSZ2} and will require the lemmas stated below. We recall that the one-sided topological Markov chain $ (\Sigma_{\mathcal{A}}^{+},\sigma_{\mathcal{A}}^+)$ has the big images and preimages (BIP) property if
%\begin{equation}\label{BIP}
$$\exists\ b_{1},\ldots,b_{N}\,\text{such that}\,\forall a\in \mathbb{N} \ \exists\ i,j,\  t_{b_{i}a}t_{ab_{j}}=1.$$
%\end{equation}
For induced map $F$ given by (\ref{ind}), we say that $\varphi$ is a \emph{piecewise H\emph{$\ddot{\emph{o}}$}lder continuous potential} of $F$ if the restriction of $\varphi$ to
the interior of any element of the partition $\mathcal{M}$ is H\emph{$\ddot{\emph{o}}$}lder continuous, i.e. for all $x, y$ in the interior of the same element of $\mathcal{M}$,
$|\varphi(x)-\varphi(y)|\leq K d(x,y)^\alpha$,
for some $\alpha > 0$, $K <\infty$.
%%%%%%%%%%%%%%%%%%%%%%%%%%%%%%%%%%%%%%%%
\begin{proposition}\label{l}
The one-sided topological Markov chain $(\Sigma_{\mathcal{A}}^{+}, \sigma_{\mathcal{A}}^+)$ has the BIP property.
\end{proposition}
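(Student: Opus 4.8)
The plan is to exhibit $\{b_1,\dots,b_N\}$ as the finitely many states required in (\ref{BIP}), taking for $b_1,\dots,b_N$ precisely the indices produced in the proof of Theorem A: those for which $M_{b_j}\subset B_j$ and $h_{b_j}(\text{int}(M_{b_j}))=B_{j+1}$ for $j=1,\dots,N-1$, together with $h_{b_N}(\text{int}(M_{b_N}))=B_1$. The argument reduces to two elementary facts about the transition matrix $\mathcal{A}=(t_{ij})$. The first is that every $\text{int}(M_i)$ lies inside some element of $\mathcal{B}$: by construction each $M_i\in\mathcal{M}$ is a closed dynamical ball whose diameter is smaller than the Lebesgue number $\beta$ of $\mathcal{B}$, so it is contained in some $B_m\in\mathcal{B}$, and since the partition was built inside $U=M\setminus\bigcup_{B\in\mathcal{B}}\partial B$ the interior $\text{int}(M_i)$ does not straddle $\partial B_m$. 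The second is the characterisation $t_{pq}=1\iff\text{int}(M_q)\subset h_p(\text{int}(M_p))$: by the finite images property $h_p(\text{int}(M_p))$ is some $B_{m'}\in\mathcal{B}$, so if $\text{int}(M_q)\cap h_p(\text{int}(M_p))\neq\emptyset$ then, being disjoint from $\partial B_{m'}$, the set $\text{int}(M_q)$ lies entirely in $B_{m'}=h_p(\text{int}(M_p))$; the converse is immediate because $h_p|_{\text{int}(M_p)}$ is a bijection onto $B_{m'}$, so the intersection defining $t_{pq}$ in (\ref{h}) is nonempty.

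Granting this, I would check the two halves of (\ref{BIP}) in turn. \emph{Big preimages.} Given $a\in\mathbb{N}$, pick $m\in\{1,\dots,N\}$ with $\text{int}(M_a)\subset B_m$. Since the images $h_{b_1}(\text{int}(M_{b_1})),\dots,h_{b_N}(\text{int}(M_{b_N}))$ run through $B_2,\dots,B_N,B_1$, there is $i\in\{1,\dots,N\}$ with $h_{b_i}(\text{int}(M_{b_i}))=B_m$ (take $i=m-1$ if $m\geq 2$ and $i=N$ if $m=1$); then $\text{int}(M_a)\subset h_{b_i}(\text{int}(M_{b_i}))$, so $t_{b_i a}=1$. \emph{Big images.} By the finite images property $h_a(\text{int}(M_a))=B_{m'}$ for some $m'\in\{1,\dots,N\}$, and since $M_{b_{m'}}\subset B_{m'}$ we get $\text{int}(M_{b_{m'}})\subset B_{m'}=h_a(\text{int}(M_a))$, hence $t_{a b_{m'}}=1$. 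Taking $j=m'$ finishes the verification, and since the set $\{b_1,\dots,b_N\}$ did not depend on $a$, this is exactly (\ref{BIP}).

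The only point I would write out carefully is the second elementary fact, namely that a nonempty intersection in (\ref{h}) forces the full inclusion $\text{int}(M_q)\subset h_p(\text{int}(M_p))$; this is nothing but the Markovian property already established in the proof of Theorem A, resting on the fact that the partition elements were chosen away from $\bigcup_{B\in\mathcal{B}}\partial B$. Everything else is bookkeeping with the $N$ distinguished partition elements furnished by the finite cycle property, and I anticipate no real obstacle beyond keeping the index shift $B_j\mapsto B_{j+1}$ (with $B_{N+1}=B_1$) straight.
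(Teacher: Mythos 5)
Your proposal is correct and follows essentially the same route as the paper: choose the $N$ distinguished indices $b_1,\dots,b_N$ furnished by the finite cycle property, use that every partition element $M_a$ sits inside some $B_m\in\mathcal{B}$ to obtain $t_{b_i a}=1$, and use $h_a(\text{int}(M_a))=B_{m'}\supset M_{b_{m'}}$ to obtain $t_{a b_{m'}}=1$. Your extra care in checking that the set inclusions really yield the nonempty-intersection condition defining $t_{pq}$ is a welcome refinement of the paper's slightly terser argument, but it is not a different proof.
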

\begin{proof}
By Theorem \ref{cmarkov} the semigroup $\Gamma$ exhibits a Markov partition $(\mathcal{M},\mathcal{H})$ with the finite images and the finite cycle properties and
 let $\mathcal{A}=(t_{ij})_{\mathbb{N} \times \mathbb{N}}$ be the corresponding transition matrix given by (\ref{h}) such that $t_{ij}=1$ if $M_i^\circ \cap h_i^{-1}(M_j^\circ) \neq \emptyset$.
Since $(\mathcal{M},\mathcal{H})$ has the finite cycle property, $\mathcal{B}=\{h_i(M_i^\circ): i \in \mathbb{N}\}$ consists of finitely many
open subsets $\{B_1, \ldots, B_N \}$ of $M$.
By FCP, there exist integer numbers $b_\ell$, $\ell=1, \ldots, N-1$, so that $M_{b_\ell}\subset B_\ell$, $h_{b_\ell}(M_{b_\ell}^\circ)=B_{\ell +1}$, $M_{b_N}\subset B_N$ and $h_{b_N}(M_{b_N}^\circ)=B_{1}$. Given $a \in \mathbb{N}$, By the construction of the Markov partition, there exists $i \in \{1, \ldots, N\}$ such that $M_a \subset B_i$. Hence, $h_{b_{i-1}}(M_{b_{i-1}}^\circ)\supset B_i \supset M_a$ which ensures that
$t_{b_{i-1}a}=1$. Also, $h_a(M_a^\circ)=B_j$ for some $j \in \{1, \ldots, N\}$. Then by the finite cycle property, $h_a(M_a^\circ)\supset M_{b_j}$, and hence $t_{a b_j}=1$.
\end{proof}
It is not hard to see that there exists a one to one correspondence between the cylinder sets $C_{i_0,\ldots,i_\ell}$ given by (\ref{cy}) and the cylinder sets of $\Sigma_{\mathcal{A}}^{+}$.
 Hence, according to statement (i) of Remark \ref{exact}, we get the next result.
\begin{proposition}\label{k}
One-sided topological Markov shift $ (\Sigma_{\mathcal{A}}^{+},\sigma_{\mathcal{A}}^+)$ is topologically mixing.
\end{proposition}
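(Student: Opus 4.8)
The plan is to verify the standard criterion for a countable topological Markov shift to be topologically mixing: for every ordered pair of states $a,b\in\mathbb{N}$ there is $n_{0}=n_{0}(a,b)$ with $(\mathcal{A}^{n})_{ab}>0$ for all $n\ge n_{0}$. Throughout I will use the description of $\mathcal{A}$ coming from the proof of Theorem~A together with the finite images property: writing $h_{i}(\text{int}(M_{i}))=B_{c(i)}$ with $c(i)\in\{1,\dots,N\}$, and using that each $\text{int}(M_{j})$ is connected, is contained in $\bigcup\mathcal{B}$, and avoids every $\partial B_{\ell}$, one gets $t_{ij}=1$ if and only if $\text{int}(M_{j})\subseteq B_{c(i)}$ (equivalently $h_{i}(M_{i})\supseteq M_{j}$). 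Recall also the marked states $b_{1},\dots,b_{N}$ supplied by the finite cycle property: $M_{b_{j}}\subseteq B_{j}$, $h_{b_{j}}(\text{int}(M_{b_{j}}))=B_{j+1}$ for $j<N$, and $h_{b_{N}}(\text{int}(M_{b_{N}}))=B_{1}$.

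First I would record the elementary transitions. Since $\text{int}(M_{b_{j+1}})\subseteq M_{b_{j+1}}\subseteq B_{j+1}=h_{b_{j}}(\text{int}(M_{b_{j}}))$, we get $t_{b_{j}b_{j+1}}=1$ for all $j$ (indices mod $N$), so $b_{1}\to b_{2}\to\cdots\to b_{N}\to b_{1}$ is an admissible loop of length $N$ through all marked states. For an arbitrary state $a$, since $\text{int}(M_{b_{c(a)}})\subseteq B_{c(a)}=h_{a}(\text{int}(M_{a}))$ we get $t_{a,b_{c(a)}}=1$, and then, following the cycle, $a$ is joined to $b_{1}$ by an admissible path of some length $p=p(a)\le N$. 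In the other direction, Remark~\ref{exact}(i) gives $T^{N}(\text{int}(M_{b_{1}}))\supseteq W=\bigcup_{\ell}\text{int}(M_{\ell})$; since the itinerary of any orbit that stays in $W$ is an $\mathcal{A}$-admissible word, this says $(\mathcal{A}^{N})_{b_{1}\ell}\ge 1$ for every state $\ell$, in particular $(\mathcal{A}^{N})_{b_{1}b}\ge 1$.

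Next I would build two loops at $b_{1}$ of coprime lengths. The $N$-cycle gives $(\mathcal{A}^{N})_{b_{1}b_{1}}\ge 1$; applying the previous step with $\ell=b_{N}$ gives $(\mathcal{A}^{N})_{b_{1}b_{N}}\ge 1$, and composing with $t_{b_{N}b_{1}}=1$ yields $(\mathcal{A}^{N+1})_{b_{1}b_{1}}\ge 1$. Since $\gcd(N,N+1)=1$, every integer $n\ge N(N-1)$ can be written as $\alpha N+\beta(N+1)$ with $\alpha,\beta\ge 0$, so $(\mathcal{A}^{n})_{b_{1}b_{1}}\ge 1$ for all such $n$. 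Finally, concatenating an admissible path $a\to\cdots\to b_{1}$ of length $p\le N$, a loop of length $q$ at $b_{1}$ with $q\ge N(N-1)$ arbitrary, and an admissible path $b_{1}\to\cdots\to b$ of length $N$, we get $(\mathcal{A}^{p+q+N})_{ab}\ge 1$; letting $q$ range over all integers $\ge N(N-1)$ this yields $(\mathcal{A}^{n})_{ab}>0$ for every $n\ge p(a)+N^{2}$, which proves topological mixing.

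The step I expect to require the most care is the passage from the set-map statement $T^{N}(\text{int}(M_{b_{1}}))\supseteq W$ to the symbolic statement $(\mathcal{A}^{N})_{b_{1}\ell}\ge 1$: one must exhibit an actual orbit $x,T(x),\dots,T^{N}(x)$ with $T^{N}(x)\in\text{int}(M_{\ell})$ and all intermediate iterates in $W$, and check that its itinerary is $\mathcal{A}$-admissible, which is immediate from the definition of $t_{ij}$ but should be written out. Conceptually, the real content is the length-$(N+1)$ loop at $b_{1}$: the $N$-cycle alone leaves the chain with a possible period $N$, and it is precisely Remark~\ref{exact}(i) — whose proof uses topological mixing of $\Gamma$ — that provides the extra loop forcing the period down to $1$; this is why topological transitivity of $\Gamma$ by itself would not be enough. (Proposition~\ref{l} could replace the explicit cycle in the ``$a\to b_{1}$'' and ``$b_{1}\to b$'' steps, but the cycle makes the coprime-loop construction most transparent.)
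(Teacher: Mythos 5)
Your proof is correct, and it rests on the same ingredients the paper itself points to --- the finite-cycle states $b_1,\dots,b_N$ and Remark \ref{exact}(i) --- but where the paper's justification of Proposition \ref{k} is a single sentence (the correspondence between dynamical balls and cylinder sets, ``according to Remark \ref{exact}''), you actually carry out the verification. Concretely, you translate $T^{N}(\mathrm{int}(M_{b_1}))\supseteq W$ into the symbolic statement $(\mathcal{A}^{N})_{b_1\ell}\ge 1$ for every state $\ell$, combine it with the FCP cycle $b_1\to\cdots\to b_N\to b_1$ to produce loops at $b_1$ of the coprime lengths $N$ and $N+1$, and then use representability of every $n\ge N(N-1)$ as $\alpha N+\beta(N+1)$, together with connecting paths $a\to b_1$ (length $\le N$) and $b_1\to b$ (length $N$), to get $(\mathcal{A}^{n})_{ab}>0$ for all sufficiently large $n$; this is exactly the content the paper leaves implicit. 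Your closing observation is also the right diagnosis of where the weight sits: the FCP cycle alone only yields $T^{j}(\mathrm{int}(M_{b_i}))\supseteq B_{i+j\bmod N}$, hence at best transitivity with a possible period $N$, so everything hinges on Remark \ref{exact}(i); the paper attributes that remark to FIP and FCP alone, and your parenthetical that topological mixing of $\Gamma$ is what really underwrites it is the more accurate reading --- any residual gap lies in that remark, not in your deduction from it. A small cosmetic point: you only ever use the easy direction of your characterization $t_{ij}=1$ if and only if $\mathrm{int}(M_j)\subseteq B_{c(i)}$, so the boundary-avoidance discussion can be trimmed.
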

Consider a potential $\varphi^\ast : \Sigma_{\mathcal{A}}^{+} \to \mathbb{R}$ and define its $n$-variations $\text{Var}_n(\varphi^\ast)$ and its
Gurevich pressure $\mathcal{P}(\varphi^\ast)$ and as well as Gibbs and equilibrium measures as above but using one-sided
sequences. By Propositions \ref{k} and \ref{l}, the one-sided topological Markov shift $(\Sigma_{\mathcal{A}}^{+},\sigma_{\mathcal{A}}^+)$ is topologically mixing and has BIP.
Hence, one can get the next result which is a restating of the main results in \cite{Sa1,Sa2} in our context.
\begin{lemma}\label{2}
Assume $\sup_{\textbf{i}\in \Sigma_{\mathcal{A}}^+} \varphi^\ast(\textbf{i}) < \infty$ and $\sum_{n=2}^\infty \text{Var}_n(\varphi^\ast)< \infty$.
Then
\begin{enumerate}
  \item [$(1)$] The variational principle for $\varphi^\ast$ holds, i.e.,
%\begin{equation*}
 $\mathcal{P}(\varphi^\ast) =\sup_{\nu \in \mathcal{M}_{\varphi^\ast}(\sigma_{\mathcal{A}}^+) } \{h_\nu(\sigma_{\mathcal{A}}^+) + \int \varphi^\ast d\nu\}.$
%\end{equation*}
  \item [$(2)$] If $\mathcal{P}(\varphi^\ast)< \infty$ and $\sum_{n=1}^\infty \text{Var}_n(\varphi^\ast)< \infty$, then there exists a unique $\sigma_{\mathcal{A}}^+$-invariant ergodic Gibbs measure $\nu_{\varphi^\ast}$ for $\varphi^\ast$.
  \item [$(3)$] If, furthermore, $h_{\nu_{\varphi^\ast}} (\sigma_{\mathcal{A}}^+) < \infty$, then $\nu_{\varphi^\ast}\in \mathcal{M}_{\varphi^\ast}(\sigma_{\mathcal{A}}^+)$ and is the unique equilibrium measure for $\varphi^\ast$.
\end{enumerate}
\end{lemma}
We extend these results to the subshift of countable type on the space of two–sided sequences. Take the natural projection $\pi_+:\Sigma_{\mathcal{A}} \to \Sigma_{\mathcal{A}}^+$ defined by
\begin{equation}\label{p+}
 \pi_+(\ldots, i_{-1}, i_0, i_1, \ldots):=(i_0, i_1, \ldots).
\end{equation}
\begin{lemma}\label{coh}
Assume $\varphi : \Sigma_{\mathcal{A}} \to \mathbb{R}$ has strongly summable variations. Then there
exists a bounded real-valued function $u : \Sigma_{\mathcal{A}} \to \mathbb{R}$ such that the function defined by
%\begin{equation*}
 $ \psi(\textbf{i}) := \varphi(\textbf{i}) + u(\sigma_{\mathcal{A}}(\textbf{i})) - u(\textbf{i})$
%\end{equation*}
satisfies:
\begin{enumerate}
  \item [$(1)$] $\psi(\textbf{i}) = \psi(\textbf{j})$ whenever $i_n = j_n$ for all $n \geq 0$,
  \item [$(2)$] $\psi$ has summable variations, i.e., $\sum_{n=1}^\infty \text{Var}_n(\psi)< \infty$,
  \item [$(3)$] if $\varphi$ is locally H\emph{$\ddot{\emph{o}}$}lder continuous, then so is $\psi$,
  \item [$(4)$] $\mathcal{P}(\psi) = \mathcal{P}(\varphi)$.
\end{enumerate}
\end{lemma}
\begin{proof}
The proof follows directly from \cite{D} and \cite[Lemma~3.3]{PSZ2}.
Take a sequence $(r_k)_{k=-\infty}^{-1}$, with $r_k \in \mathbb{N}$ so that $t_{r_k r_{k+1}} = 1$ for each $k \leq -2$.
 By BIP, for each $\textbf{i}^+=(i_0, i_1, \cdots) \in \Sigma_{\mathcal{A}}^+$, there exists a finite
admissible word $w$ such that the concatenation $(\ldots, r_{-2}, r_{-1},w, i_0, i_1, \ldots)$ belongs to $\Sigma_{\mathcal{A}}$.
Define $r : \Sigma_{\mathcal{A}} \to \Sigma_{\mathcal{A}}$ by $r(\textbf{i})=(\ldots, r_{-2}, r_{-1},w, i_0, i_1, \ldots)$, where
$(i_0, i_1, \ldots)=\pi_+(\textbf{i})$ as defined in (\ref{p+}). Note that the finite word $w$ only depends on $r_{-1}$ and $i_0$. Let
\begin{equation*}
  u(\textbf{i}):= \sum_{j=0}^{\infty}\varphi(\sigma_{\mathcal{A}}^j(\textbf{i}))-\varphi(\sigma_{\mathcal{A}}^j(r(\textbf{i}))).
\end{equation*}
Since
%\begin{equation*}
  $|\varphi(\sigma_{\mathcal{A}}^j(\textbf{i}))-\varphi(\sigma_{\mathcal{A}}^j(r(\textbf{i})))| \leq Var_{j+1}(\varphi),$
%\end{equation*}
the function $u$ is well defined and uniformly bounded. Applying the strong summability of $\varphi$ and the estimation in \cite[Lemma~3.3]{PSZ2}, we get
%\begin{equation*}
 $ \sum_{n \geq 1}Var_n(u)< \infty.$
%\end{equation*}
To prove the first statement, note that
\begin{equation*}
  \psi(\textbf{i})=\sum_{j=0}^{\infty}\varphi(\sigma_{\mathcal{A}}^j(r(\textbf{i})))-\varphi(\sigma_{\mathcal{A}}^j(r(\sigma_{\mathcal{A}}(\textbf{i}))))
\end{equation*}
only depends on $i_j$ with $j \geq 0$. The other statements follow directly from \cite[Lemma.~3.3]{PSZ2}.
\end{proof}
Now, using Lemmas \ref{2} and \ref{coh} and the approach used in \cite[Theorem.~3.1]{PSZ2}, we conclude the proof of Theorem \ref{mt}.
\section{Thermodynamic formalism of the induced skew product}\label{sec7}
\subsection{Random Markov chains and liftable measures}\label{Section: 5.1}
%Here, the main purpose is to characterize those invariant measures of the associated skew products which are liftable .
Here, we show that the subset of liftable measures of induced skew products is nonempty.
To do that we will establish a bridge between topological Markov chains and locally conformal semigroup
actions.
We use the notion of random Markov fibred system introduced in \cite{DKS}
and define a random Markov chain in our setting. It allows us to obtain a convenient Markov measure.
The rest of our argument is based on the link between
the dynamics of the semigroup action and the induced skew product.

Given a countable index set $I$ with the discrete topology, let $I^{\mathbb{N}}$ be
the space of sequences in $I$. Assume, $\mathcal{C}$ is the Borel $\sigma$-algebra with respect to the product topology
and consider $\sigma_I:I^{\mathbb{N}} \to I^{\mathbb{N}}$, $(i_1, i_2, \ldots)\mapsto (i_2, i_3, \ldots)$ the left shift.
Assume $(\Theta, \mathcal{F}, \mu)$ is a standard probability space with a probability-preserving invertible transformation $\theta: \Theta \to \Theta$
and let
\begin{equation*}
  S: I^{\mathbb{N}} \times \Theta \to I^{\mathbb{N}} \times \Theta, \ S(\textbf{i},x):=(\sigma_I(\textbf{i}),\theta(x)).
\end{equation*}
A system of random Markov measures is defined as follows.
Let $\mathbb{P}$ be the set of probability vectors and take $\mathbb{A}$ the set of transition matrices, that is
\begin{center}
$\mathbb{P}:= \Big\{(p_i)_{i \in I} \in [0,1]^I: \sum_{i \in I} p_i=1\Big\}$\\
$\mathbb{A}:=\Big\{(\pi_{ij})_{i,j \in I} \in [0,1]^{I\times I}: \sum_{j \in I} \pi_{ij}=1 \ \text{for all} \ i\in I\Big\}$
\end{center}
Note that each pair $((p_i),(\pi_{ij}))\in \mathbb{P} \times \mathbb{A}$ defines a Markov measure $m_{((p_i),(\pi_{ij}))}$ on $(I^{\mathbb{N}},\mathcal{C})$ as follows. For $a=(i_1,\ldots,i_n)\in I^n$ and $[a]:=\{((j_{k})_{k\in\mathbb{N}},x)\in I^{\mathbb{N}} \times \Theta:j_{k}=i_{k} \ \text{for} \ k=1,\ldots,n\}$, let
\begin{equation*}
m_{((p_i),(\pi_{ij}))}([a]):=p_{i_1}\prod_{k=1}^{n-1}\pi_{i_k i_{k+1}}.
\end{equation*}
Given a measurable map $\Theta \to \mathbb{P} \times \mathbb{A}, \ x \mapsto ((p_i^x),(\pi_{ij}^x))$, a family of Markov
measures $\{m^x: x \in \Theta\}$ is defined by
\begin{equation*}
m^x_{((p_i^x),(\pi_{ij}^x))}([a]):=p_{i_1}^{x}\prod_{k=1}^{n-1}\pi^{\theta^{k-1}(x)}_{i_k i_{k+1}},
\end{equation*}
where $a=(i_1,\ldots,i_n)\in I^n$. If $S_{x}$ is non-singular with respect to $m^x$ for a.e. $x \in \Theta$, we call the system $(I^{\mathbb{N}}\times \Theta, \mathcal{C}\otimes \mathcal{F}, ((p_i^x),(\pi_{ij}^x)), S)$  a \emph{random Markov chain}. The next result ensures the existence of a $S$-invariant measure.
% where $T^+_{x}:I^{\mathbb{N}}\to I^{\mathbb{N}}$, $T^+_{x}((j_{k})_{k\in\mathbb{N}})=T^{+}((j_{k})_{k\in\mathbb{N}},x)$.
\begin{proposition} \cite[Proposition 5.1]{DKS} \label{propo1}
Let $(I^{\mathbb{N}}\times \Theta, \mathcal{C}\otimes \mathcal{F}, ((p_i^x),(\pi_{ij}^x)), S)$ be a random Markov chain with the following properties:
\begin{enumerate}
  \item [$(a)$] there exists a family $\{C_x\}_{x \in \Theta}$, $C_x > 1$, such that for $\mu$-a.e. $x \in \Theta$ and $i, j, k \in I$ with $\pi_{ij}^x$, $\pi_{ik}^x$, $p_j^{\theta(x)}$, $p_k^{\theta(x)}>0$,
\begin{equation*}
  C_{\theta(x)}^{-1}<\frac{\pi_{ij}^x}{\pi_{ik}^x}.\frac{p_k^{\theta(x)}}{p_j^{\theta(x)}}   < C_{\theta(x)};
\end{equation*}
  \item [$(b)$] there exists a family $\{\eta_x\}_{x \in \Theta}$, $\eta_x > 0$, such that for $\mu$-a.e. $x \in \Theta$ and $i \in I$ with
$p_i^x> 0$,
  \begin{equation*}
    \sum_{j: \pi_{ij}^x>0}p_j^{\theta(x)}> \eta_{\theta(x)}.
  \end{equation*}
\end{enumerate}
Then there exists a measurable map $x \mapsto (\mathbb{P}_{i}^x)\in \mathbb{P}$ such that the measure $d\mathbb{P}^x d \mu$ on $(I^{\mathbb{N}}\times \Theta, \mathcal{C}\otimes \mathcal{F})$
defined by $\mathbb{P}^x=m^x_{((\mathbb{P}_{i}^x),(\pi_{ij}^x))}$ is $S$-invariant.
\end{proposition}
For the locally conformal group $\Gamma$ with inducing scheme $(\mathcal{M}, \tau)$, we will define a random Markov chain as the following and show that it satisfies conditions ($a$) and ($b$)
 of the previous proposition. So, we provide a suitable Markov measure. Take the countable Markov partition $(\mathcal{M}, \mathcal{H})=\{(M_{i}, h_i)\}_{i\in\mathbb{N}}$ with the FIP and the FCP.
Let $\pi$ be the coding map given by (\ref{code}), $\check{\Sigma}\subset \Sigma_\mathcal{A}$ defined by (\ref{hat}) and take $\widetilde{W}$ from (\ref{w1}) with $\widetilde{W}=\pi(\check{\Sigma})$.
% where $F$ is the induced map defined by (\ref{ind}).
Note that, by Proposition \ref{pi}, $\pi$ is one to one on $\check{\Sigma}$ and $F$ is one to one on $\widetilde{W}$.
Take the projection $\pi_+:\Sigma_\mathcal{A}\to \Sigma_{\mathcal{A}^+}$ given by (\ref{p+}). Now, for each $x\in \widetilde{W}$, let $(p_i^{x})_{i \in\mathbb{N}}$ be a non-negative Borel measurable partition of unity defined by
\begin{equation}\label{h1}
p_i^{x}:= \left\{
 \begin{array}{rl}
  1 & \text{if}\ x\in M_i^\circ\\
  0 & \text{otherwise},
 \end{array}\right.
\end{equation}
and let $(\pi^x_{ij})_{i,j\in\mathbb{N}}$ be a right stochastic matrix (i.e. $\forall i\in\mathbb{N}, \sum_{j\in\mathbb{N}}\pi^x_{ij}=1$) defined by
\begin{equation}\label{h11}
\pi^x_{ij}:= \left\{
 \begin{array}{rl}
  1 & \text{if}\ x\in M_i^\circ \ \text{and} \ h_i(x)\in M_j^\circ\\
  0 & \text{otherwise}.
 \end{array}\right.
\end{equation}
For each finite word $\zeta=(i_1,\ldots,i_n)\in\mathbb{N}^{n}$, any $x\in \widetilde{W}$ and the associated cylinder set
$[\zeta^x]=\{((j_k)_{k\in \mathbb{N}},x): j_k \in\mathbb{N}, \ \text{and} \ j_k=i_k, 1\leq k\leq n\}$, we define a family of probability measures $\{P^x; x\in \widetilde{W}\}$  by
\begin{equation*}
P^x([\zeta^x]):=p_{i_1}^{x}\prod_{k=1}^{n-1}\pi^{F^{k-1}(x)}_{i_k i_{k+1}}.
\end{equation*}
Denote the set of all $F$-invariant ergodic Borel probability measures by $\mathcal{M}(F)$.
Note that Proposition \ref{pro222} guarantees the existence of an $F$-invariant absolutely continuous probability measure equivalent to Lebesgue.
Let $\mathcal{B}_{\check{\Sigma}_+}$ and $\mathcal{B}_{\widetilde{W}}$ be the Borel $\sigma$-algebras on $\check{\Sigma}_+$ and $\widetilde{W}$, respectively.
%and the measure $\nu^+$ given by (\ref{m1}).
Given any measure $\mu_F \in \mathcal{M}(F)$, take a measure
%\begin{equation}\label{m1}
$dP^xd\mu_F$
%\end{equation}
on $\check{\Sigma}_+ \times \widetilde{W}$.
We take $Y^+:=\check{\Sigma}_+ \times \widetilde{W}$ and define a mapping $T^+:Y^+ \to Y^+$ by
%\begin{equation*}
$ T^+(\textbf{i},x)=(\sigma_{\mathcal{A}^+}(\textbf{i}),F(x))$.
%\end{equation*}
By definitions, $\check{\Sigma}_+$ is $\sigma_{\mathcal{A}^+}$ invariant and
$\widetilde{W}$ is $F$-invariant, hence, $T^+(Y^+)\subset Y^+$.
 Note that the mapping $F:\widetilde{W} \to \widetilde{W}$ is an invertible, bi-measurable and probability-preserving transformation and $T^+$ is non-singular.
Also, we take the $\sigma$-algebra $\mathcal{C}^+=\mathcal{B}_{\check{\Sigma}_+}\otimes \mathcal{B}_{\widetilde{W}}$ on $Y^+$.
Then, the system $(Y^+, \mathcal{C}^+, ((p_i^x),(\pi_{ij}^x)), T^+)$ is a random Markov chain.
%%%%%%%%%%%%%%%%%%%%%%%%%%%%%%%%%%%%%%%%%%%%%%%%%
\begin{proposition}\label{prott}
There exists a measurable map $x \mapsto (\mathbb{P}^x_{i})_{i \in\mathbb{N}}$ such that the measure $d\mathbb{P}^x d\mu_F$ on $(Y^+,\mathcal{C}^+)$ given by
\begin{equation*}
\mathbb{P}^x([\zeta^x]):=\mathbb{P}_{i_1}^x\prod_{k=1}^{n-1}\pi^{F^{k-1}(x)}_{i_k i_{k+1}}
\end{equation*}
is $T^+$-invariant, where $\zeta=(i_1,\ldots,i_n)$ is a finite word of alphabets $\mathbb{N}$ and $(\mathbb{P}^x_{i})_{i \in\mathbb{N}}$ is a probability vector, that is $\mathbb{P}^x_{i} \in [0,1]$ and $\sum_{i\in\mathbb{N}}\mathbb{P}^x_{i} =1$.
\end{proposition}
\begin{proof}
To prove, we verify conditions ($a$) and ($b$) of Proposition \ref{propo1} for $(Y^+, \mathcal{C}^+, ((p_i^x),(\pi_{ij}^x)), T^+)$ as defined above.
By (\ref{h1}) and (\ref{h11}), if for $x \in \widetilde{W}$ and $i, j, k\in\mathbb{N}$ one has $\pi^x_{ij}>0$, $\pi^x_{ik}>0$, $p_j^{F(x)}>0$ and $p_k^{F(x)}>0$, then $j=k$ and $\pi^x_{ij}=\pi^x_{ik}=p_j^{F(x)}=p_k^{F(x)}=1$.
Thus, if we take $C_x:=2$, for each $x \in \widetilde{W}$, we have
\begin{equation*}
  C_{F(x)}^{-1}< \frac{\pi^x_{ij}}{\pi^x_{ik}}.\frac{p_k^{F(x)}}{p_j^{F(x)}}< C_{F(x)}.
\end{equation*}
Thus condition ($a$) is satisfied. Moreover, if we take $\eta_x :=1/2$ then
\begin{equation*}
  \sum_{j: \pi^x_{ij}>0}p_j^{F(x)}> \eta_{F(x)}
\end{equation*}
which verifies condition ($b$).
Now, the proof can be followed by Proposition \ref{propo1}.
\end{proof}
Now, consider an extension $(Y, \mathcal{C}, T)$ of $(Y^+, \mathcal{C}^+, T^+)$ as follows. Take $Y:=\check{\Sigma} \times \widetilde{W}$ and define the extension map $T:Y \to Y$ by
%\begin{equation}\label{tt}
$T(\textbf{i},x)=(\sigma_{\mathcal{A}}(\textbf{i}),F(x))$.
%\end{equation}
Consider the Borel $\sigma$-algebra $\mathcal{C}$ on $Y$.
By Kolmogorov's extension theorem, $(Y,\mathcal{C}, T)$ admits an invariant measure $\nu$
inherited from the invariant measure $d\mathbb{P}^x d\mu_F$ on $(Y^+,\mathcal{C}^+, T^+)$ given by the previous proposition.
Furthermore, by Proposition \ref{l}, the one-sided topological Markov shift $(\Sigma_{\mathcal{A}}^{+},\sigma_{\mathcal{A}}^+)$ is topologically mixing.
Also, by part (i) of Remark \ref{exact}, the induced map $F$ is topologically mixing.
These observations imply that the measure $d\mathbb{P}^x d\mu_F$ and hence the measure $\nu$ are ergodic.
Thus, we get the next result.
\begin{corollary}\label{remtt}
Consider the above extension $(Y, \mathcal{C}, T)$ of $(Y^+, \mathcal{C}^+, T^+)$. Then $(Y,\mathcal{C}, T)$ admits an ergodic invariant measure $\nu$
inherited from the invariant measure $d\mathbb{P}^x d\mu_F$ given by Proposition \ref{propo1}.
\end{corollary}
%Note that $f_{\textbf{i}}^{\tau(x)}(x)=h_{i_0}(x)=F(x)$, where $i_0$ is the zeroth digit of $\textbf{i}$.
Take
\begin{equation*}
  Z:= Y \bigcap \big(\bigcup_{i\in\mathbb{N}}C(i)\times M_i \big)\quad \text{and} \quad H:=\rho\times id |_{Z},
\end{equation*}
where $\rho$ is the injective map defined by (\ref{rho}).
%$Z:= Y \cap \bigcup_{i\in\mathbb{N}}C(i)\times M_i$ and $H$ the restriction of $\rho\times id$ to the subset $Z$.
Then, $H \circ T|_{Z}=F_A^{\psi} \circ H$ which means that $H$ is a conjugacy between $T |_{Z}$ and $F_A^\psi $.
%Now, we show that the subset of liftable measures of the associated skew product $F_A$ is nonempty.
Put
\begin{equation}\label{m2}
%m=(\rho\times id)_*\nu,
m:=H_*\nu.
\end{equation}
%where the measure $\nu$ is the $T$-invariant measure given by Corollary \ref{remtt} and $\rho$ is the injective map defined by (\ref{inj}).
The support of $m$ is contained in the subset $\mathbb{W}$ and it can be extended to a probability measure on the whole space $\Sigma_A\times M$ by taking $m(A)=0$ for each measurable set
$A \subset \Sigma_A\times M \setminus \mathbb{W}$. In particular, $m$ is an $F_A^\psi$-invariant probability measure. Moreover, by Corollary \ref{remtt}, it is also ergodic.
Hence, we get the next result.
\begin{proposition}\label{p66}
Measure $m$ given by (\ref{m2}) is an $F_A^\psi$-invariant ergodic probability measure. Furthermore, the support of $m$ is contained in $\mathbb{W}$.
\end{proposition}

The next result shows that the measure $m$ given by (\ref{m2}) can be lifted to an $F_A$-invariant measure.
As a consequence, the subset $ \mathcal{M}_L(F_A,\mathbb{Y})$ of liftable measures is nonempty. Before we recall the following proposition (Proposition 4.1 in \cite{PSZ2}).

\begin{proposition}\label{stationary}
Consider the measure $m$ given by (\ref{m2}). Then $Q_m < \infty$ and $\mathcal{L}(m) \in \mathcal{M}(F_A, \mathbb{Y})$.
\end{proposition}
\begin{proof}[proof of Proposition \ref{p66}]
Let us take the constant $0<\sigma<1$ given by condition (\textbf{C1}) of Definition \ref{expand}. Then, by definition of $\psi$, we get
$$Q_{m}=\int_{\mathbb{W}}\psi dm=\sum_{i\in\mathbb{N}}\tau(M_i^\circ)\mu_F(M_i^\circ)\leq \sum_{i\in\mathbb{N}}\tau(M_i^\circ)\sigma^{\tau(M_i^\circ)}<\infty.$$
By Proposition \ref{stationary}, this completes the proof.
\end{proof}
Theorem C is now an immediate corollary of Propositions \ref{p66} and \ref{stationary}.
\subsection{Thermodynamic formalism with respect to liftable measures}\label{section8}
%The aim of this subsection is to establish the thermodynamic formalism for the associated skew product $F_A$.
Following the approach used in \cite{PSZ2}, for the induced skew product map $F_A$, we will establish a thermodynamic formalism for liftable measures.
In particular, we present some conditions on potentials which guarantee existence and uniqueness of the associated
equilibrium measures. These measures minimize the free energy among the liftable measures.
We start with some preparatory results.

Let $\phi: \Sigma_A\times M \to \mathbb{R}$ be a Borel function, call {\it potential}. We assume
that $\phi$ is well-defined and is finite at every point $(\textbf{i},x) \in \Sigma_A\times M$. For a potential $\phi$, a measure $\mu_\phi$, in the space $\mathcal{M}_L(F_A,\mathbb{Y})$ of liftable measures, will be called an \emph{equilibrium measure} for
$\phi$ if
\begin{equation*}
  P_L(\phi):=\sup_{\mu \in \mathcal{M}_L(F_A,\mathbb{Y})}\{h_\mu(F_A)+\int_{\Sigma_{A}\times M} \phi d\mu\}=h_{\mu_\phi}(F_A)+\int_{\Sigma_{A}\times M} \phi d\mu_\phi.
\end{equation*}
This definition of equilibrium measures only considers liftable measures and it differs from the standard one. We define the induced potential $\overline{\phi}: \mathbb{W} \to \mathbb{R}$ by
\begin{equation}\label{pot}
\overline{\phi}(\textbf{i},x) :=\sum_{\ell=0}^{\psi(\rho(C(i))\times M_i)-1}\phi (F_A^\ell(\textbf{i},x)), \ (\textbf{i},x)\in \rho(C(i))\times M_i.
\end{equation}

By using the next result, we provide a relation between the thermodynamic formalism of original and induced systems.
It is a reformulation of Abramov's and Kac's formula in our setting.
\begin{proposition}\label{kac}
Let $m \in \mathcal{M}(\mathbb{F}, \mathbb{W})$ with $Q_m < \infty$, where $Q_m$ is given by (\ref{q1}).
Then
\begin{equation*}
  h_m(\mathbb{F})=Q_m .h_{\mathcal{L}(m)}(F_A)<\infty.
\end{equation*}
Let $\phi$ be a potential and $\overline{\phi}$ the corresponding induced potential. If $\int_{\mathbb{W}} \overline{\phi} dm$ is finite then
\begin{equation*}
  -\infty <\int_{\mathbb{W}} \overline{\phi} dm=Q_m \int_{\mathbb{Y}} \phi d\mathcal{L}(m) < \infty.
\end{equation*}
\end{proposition}
\begin{proof}
For the proof of Abramov's formula we refer to \cite{Z} (note that we require
the topological entropy of $F_{A}$ to be finite). To prove Kac's formula, using the definition of $\mathcal{L} (m)$, we get
\begin{align*}
% \nonumber to remove numbering (before each equation)
 \int_{\mathbb{W}} \overline{\phi} dm
  =& \int_{\mathbb{W}} \sum_{\ell=0}^{\psi(\textbf{i},x)-1}\phi (F_A^\ell(\textbf{i},x)) dm(\textbf{i},x) \\
  =& \sum_{\rho(C(i))\times M_i} \sum_{\ell=0}^{\psi(\rho(C(i))\times M_i)-1} \int_{\rho(C(i))\times M_i} \phi (F_A^\ell(\textbf{i},x)) dm_{|(\rho(C(i))\times M_i)}(\textbf{i},x) \\
 =& \sum_{\rho(C(i))\times M_i} \sum_{\ell=0}^{\psi(\rho(C(i))\times M_i)-1} \int_{\mathbb{Y}} \phi(\textbf{j},y)dm(F_A^{-\ell}(\textbf{j},y)\cap \rho(C(i))\times M_i )\\
 =& Q_m \int_{\mathbb{Y}} \phi d\mathcal{L}(m).
\end{align*}
\end{proof}
In the previous subsection, we obtained an inducing scheme for $F_A$. This causes that $F_A$
can be modeled by towers over the subshift on the space of two-sided infinite sequences with a countable alphabet.
Thermodynamic formalism of this kind of subshifts studied in Section \ref{section6}.
In the rest, we use the results of Section \ref{section6} to establish a thermodynamic formalism for $F_A$ over the liftable measures.

For the induced potential $\overline{\phi}$ given by (\ref{pot}), we take
\begin{equation}\label{bar}
 \Phi:\Sigma_\mathcal{A} \to \mathbb{R}, \  \Phi(\textbf{i}):=\overline{\phi}(\rho(\textbf{i}), \pi(\textbf{i})),
\end{equation}
where $\pi$ is the coding map given by (\ref{code}).
Clearly, $ \Phi$ is well-defined on $\Sigma_\mathcal{A}$.
We call a measure $\nu_{\overline{\phi}}$ on $\mathbb{W}$ a Gibbs measure for $\overline{\phi}$ if the measure $(\rho \times \pi)^{-1}_\ast \nu_{\overline{\phi}}$
is a Gibbs measure for the function $\Phi$.
We call $\nu_{\overline{\phi}}$ an equilibrium measure for $\overline{\phi}$ if $- \int_{\mathbb{W}}\overline{\phi} d\nu_{\overline{\phi}}< \infty$ and
\begin{equation*}\label{gib}
  h_{\nu_{\overline{\phi}}}(\mathbb{F})+\int_{\mathbb{W}} \overline{\phi} d\nu_{\overline{\phi}} =\sup_{\nu \in \mathcal{M}(\mathbb{F},\mathbb{W}):  \ - \int_{\mathbb{W}}\overline{\phi} d\nu< \infty}\{h_\nu(\mathbb{F})+\int_{\mathbb{W}} \overline{\phi } d\nu \}.
\end{equation*}
Following \cite{PSZ2}, the potential $\overline{\phi}$
\begin{enumerate}
  \item [$(a)$] has \emph{summable variations} if the function $\Phi$ has summable variations, i.e.,
  \begin{equation*}
    \sum_{n \geq 1} V_n(\overline{\phi} \circ (\rho \times \pi))=\sum_{n \geq 1}V_n(\Phi)< \infty;
  \end{equation*}
  \item [$(b)$] has \emph{finite Gurevich pressure} if $P_G(\overline{\phi} \circ (\rho \times \pi))=P_G(\Phi)< \infty$.
 \end{enumerate}
\begin{proposition}\label{p1}
Assume that the function $\overline{\phi}$ has summable variations and finite
Gurevich pressure. Then
\begin{equation*}
  -\infty < P_L(\phi) < \infty.
\end{equation*}
\end{proposition}
\begin{proof}
By Remark \ref{rem222}, there is a periodic orbit for $\mathbb{F}$ in the set $\mathbb{W}$.
For the Dirac measure on that orbit, we have $\int_{\mathbb{Y}} \phi d\mu > -\infty$.
Then, since $h_\mu(F_{A})\geq 0$, we conclude that $P_L(\phi)> -\infty$. On the other hand, For every $\mu \in \mathcal{M}_L(F_{A},\mathbb{Y} )$ there exists a measure $m \in \mathcal{M}(\mathbb{F},\mathbb{W})$ with $\mathcal{L}(m)=\mu$, $Q_m < \infty $, and by
Proposition \ref{kac},
\begin{equation*}
  0 \leq h_m(\mathbb{F})=Q_m .h_{\mathcal{L}(m)}(F_{A})<\infty.
\end{equation*}
Take $\mu \in \mathcal{M}_L(F_{A},\mathbb{Y} )$ such that $\int_{\mathbb{W}} \overline{\phi}d m >-\infty$, where $\mathcal{L}(m)=\mu$.
Since $\overline{\phi}$ has summable variations and finite Gurevich pressure, one can show that it is bounded from above.
Hence $-\infty <\int_{\mathbb{W}} \overline{\phi}d m < \infty$, and by Proposition \ref{kac},
\begin{equation*}
 -\infty <\int_{\mathbb{W}} \overline{\phi}d m=Q_m \int_{\mathbb{Y}} \phi d\mu <\infty.
\end{equation*}
If $P_L(\phi)$ is nonpositive the upper bound is immediate. If $P_L(\phi)$ is positive, using the fact that $1 \leq Q_m < \infty$,
we closely follow \cite[Theorem~4.2]{PS} to get $P_L(\phi)<\infty$ and
omit the details.
\end{proof}
\begin{remark}\label{conj}
Notice that the existence and uniqueness of equilibrium measures for the induced system $(\mathbb{F},\mathbb{W})$ is obtained by conjugating the induced system to the restriction of two-sided subshift $(\Sigma_{\mathcal{A}},\sigma_{\mathcal{A}})$ to the subset $\check{\Sigma}\subset \Sigma_\mathcal{A}$ defined by (\ref{hat})
by the conjugacy map $G:=\rho^{-1}\circ \pi_1$, where
$\pi_1: \Sigma \times M \to \Sigma$ is the natural projection to the first factor and $\Sigma=\rho(\check{\Sigma})$.
\end{remark}
Let us take
\begin{equation*}
  \phi^+:=\overline{\phi - P_L(\phi)}=\overline{\phi}-P_L(\phi)\psi
\end{equation*}
and let $\Phi^+:= \phi^+ \circ (\rho \times \pi)$ with $\rho \times \pi(\textbf{i})=(\rho(\textbf{i}),\pi(\textbf{i}))$, $\textbf{i}\in \Sigma_{\mathcal{A}}$, where $\pi$ is the coding map given by (\ref{code}). Also, we take
\begin{equation}\label{ex}
 \widetilde{\mathbb{W}}:=\{\widetilde{X}=(X_n)_{n\leq 0}:\mathbb{F}(X_n)=X_{n+1}\},
\end{equation}
where $X_n=(\textbf{i}_n,x_n)\in \mathbb{W}$ and define the extension map $\widetilde{\mathbb{F}}$ by $(\widetilde{\mathbb{F}}(\widetilde{X}))_n=X_{n+1}$.

The proof of the following has the same flavor as that in Section 4 of \cite{PSZ2}.
\begin{theorem}\label{maint}
Assume that $\Phi$ has strongly summable variations and $P_G(\Phi)<\infty$.
Assume also that $P_G(\Phi^+)<\infty$ and $\sup_{\textbf{i} \in \Sigma_\mathcal{A}}\Phi^+(\textbf{i})<\infty$. Then
\begin{enumerate}
  \item [$(a)$] there exists a $\sigma_\mathcal{A}$-invariant ergodic Gibbs measure $\nu_{\Phi^+}$ for $\Phi^+$;
  \item [$(b)$] if $h_{\nu_{\Phi^+}} (\sigma_\mathcal{A}) < \infty$, then $\nu_{\Phi^+}$ is the unique equilibrium measure for $\Phi^+$;
  \item [$(c)$] if $h_{\nu_{\Phi^+}} (\sigma_\mathcal{A}) < \infty$ then the measure $\nu_{\phi^+}:= (\rho \times \pi)_\ast \nu_{\Phi^+}$ is a unique $\mathbb{F}$-invariant
ergodic equilibrium measure for $\phi^+$;
  \item [$(d)$] if $P_G(\Phi^+)=0$ and $Q_{\nu_{\Phi^+}}<\infty$, then $\mu_\phi = \mathcal{L}(\nu_{\Phi^+} )$ is the unique ergodic
equilibrium measure in the class $\mathcal{M}_L(F_{A},\mathbb{Y} )$ of liftable measures.
\end{enumerate}
\end{theorem}
\begin{proof}
By Proposition \ref{p1}, $P_L(\phi)$ is finite and that $V_n(\Phi) = V_n(\Phi^+)$. By these facts and Theorem \ref{mt}, statements (a) and (b) can be followed.

To prove statement (c), consider the function $\varphi : \widetilde{\mathbb{W}} \to \mathbb{R}$
defined by $\varphi((X_n)_{n\leq 0}):=\overline{\phi}(X_0)$, and let $\Phi=\varphi \circ \pi^+ \circ G^{-1}$ and $\Phi^+=\varphi^+ \circ \pi^+ \circ G^{-1}$,
where $G$ is the conjugacy map given by Remark \ref{conj}, $\varphi^+ = \varphi- P_L(\phi)\psi$, $\pi^+$
is the projection defined by $\pi^+((X_n)_{n \in \mathbb{Z}})=(X_n)_{n \leq 0}$ for $X_n \in \mathbb{W}$ with $\mathbb{F}(X_n)=X_{n+1}$ and $\psi$ is the inducing time given by (\ref{psi}).

By Proposition \ref{proin}, the set $\check{\Sigma}$ has full measure by $\nu_{\Phi^+}$.
%the measure $\nu_{\Phi^+}$ gives full weight to the set $\check{\Sigma}$.
Thus, by Proposition \ref{pi}, the measure $\widetilde{\nu}_{\varphi^+} := (\pi^+ \circ G^{-1})_\ast \nu_{\Phi^+}$
is the unique $\widetilde{\mathbb{F}}$-invariant ergodic equilibrium measure for $\varphi^+$, i.e.,
\begin{equation*}
  h_{\widetilde{\nu}_{\varphi^+}}(\widetilde{\mathbb{F}})+ \int \varphi^+ d\widetilde{\nu}_{\varphi^+}=\sup_{\nu \in \mathcal{M}(\widetilde{\mathbb{F}},\widetilde{\mathbb{W}})}\{h_\nu(\widetilde{\mathbb{F}})+\int \varphi^+ d\nu\},
\end{equation*}
where $\mathcal{M}(\widetilde{\mathbb{F}},\widetilde{\mathbb{W}})$
denotes the set of ergodic $\widetilde{\mathbb{F}}$-invariant Borel probabilities on $\widetilde{\mathbb{W}}$.
It is known that for given $\nu \in \mathcal{M}(\mathbb{F},\mathbb{W})$ and $\widetilde{\nu} \in \mathcal{M}(\widetilde{\mathbb{F}},\widetilde{\mathbb{W}})$ the lift of $\nu$,
$h_\nu(\mathbb{F})=h_{\widetilde{\nu}}(\widetilde{\mathbb{F}})$.
%Recall now that there is a one-to-one entropy preserving correspondence between
%$\mathcal{M}(\mathbb{F},\mathbb{W})$ and $\mathcal{M}(\widetilde{\mathbb{F}},\widetilde{\mathbb{W}})$.
Also, for any $\overline{\phi}\in L^1(\nu)$ the function $\varphi$ on $\widetilde{\mathbb{W}}$
satisfies $\int \overline{\phi}d \nu=\int \varphi d\widetilde{\nu}$. Since $\nu_{\phi^+}$ is the image of $\widetilde{\nu}_{\varphi^+}$ under this
correspondence, (c) is verified.

Also, $Q_{\nu_{\phi^+}}<\infty$ and Proposition \ref{p1} imply $h_{\nu_{\Phi^+}}(\sigma_{\mathcal{A}})<\infty$. Hence, (c) is satisfied. Finally, statement (c) and Proposition \ref{p1} imply
\begin{equation*}
  0=h_{\phi^+}(\mathbb{F})+\int \phi^+ d\nu_{\phi^+}=\frac{1}{Q_{\nu_{\phi^+}}}(h_{\mu_{\phi}}(F_{A})+\int \phi d\mu_\phi -P_L(\phi))
\end{equation*}
hence
\begin{equation*}
h_{\mu_{\phi}}(F_{A})+\int \phi d\mu_\phi=P_L(\phi).
\end{equation*}
\end{proof}
The following theorem is a counterpart of \cite[Theorem~4.6]{PSZ2} to our setting.
\begin{theorem}\label{t55}
Let $(\mathcal{S}, \psi)$ be an inducing scheme for the induced skew product $F_{A}$ corresponding to
the inducing scheme of hyperbolic type $(\mathcal{M}, \tau)$ for a topologically mixing locally conformal
expanding semigroup $\Gamma$. Let $\phi \colon \Sigma_{A} \times M \to \mathbb{R}$ be a potential and $\Phi$ is given by (\ref{bar}).
 The following statements hold:
\begin{enumerate}
  \item [$(1)$] Condition $(P1)$ implies that the function $\Phi$ has strongly summable variations.
  \item [$(2)$] Condition $(P2)$ implies the finiteness of the Gurevich pressure $P_G(\Phi) < \infty$
and \\$\sup_{\textbf{i} \in \Sigma_{\mathcal{A}} }\Phi(\textbf{i}) < \infty$.
  \item [$(3)$] Condition $(P3)$ implies that $\sup_{\textbf{i} \in \Sigma_{\mathcal{A}} }\Phi^+(\textbf{i}) < \infty$ and $P_G(\Phi^+) = 0$.
  \item [$(4)$] If $\Phi$ has strongly summable variations, then Condition $(P2)$ implies $Q_{\nu_{\Phi^+}}<\infty$.
\end{enumerate}
\end{theorem}
Theorem D is now an immediate corollary of Theorems \ref{maint} and \ref{t55}.
%\section*{Acknowledgments}
%We wish to thank M. Viana for valuable suggestions and for useful conversations.
%A. Fakhari benefits from IPM and ICTP support.


\begin{thebibliography}{}
%\bibitem{Aa}
%J. Aaronson, An Introduction to Infinite Ergodic Theory (Mathematical Surveys and Monographs, 50).
%American Mathematical Society, Providence, RI, (1997).
%\bibitem{AOT}
%J. F. Alves, K. Oliveira, A. Tahzibi, On the Continuity of the SRB Entropy for Endomorphisms, \emph{Journal of Statistical Physics}, 123, 4, (2006).
%\bibitem{BEH}
%M. F. Barnsley, J.H.  Elton and D. P.  Hardin, Recurrent iterated function systems,
%\emph{Constr. Approx}, 5, 3-31, (1989).
\bibitem{Bi1}
A. Bis, Partial variational principle for finitely generated groups of polynomial growth and some foliated spaces, \emph{Colloq. Math}. 110(2), 431-449, (2008).
\bibitem{Bi2}
A. Bis, An analogue of the variational principle for group and pseudogroup actions, \emph{Ann. Inst. Fourier} (Grenoble) 63(3),
839-863, (2013).
\bibitem{Bi3}
 A. Bis and M. Urbanski, Some remarks on topological entropy of a semigroup of continuous maps, Cubo 8(2), 63-71, (2006).
\bibitem{B1} R. Bowen, Markov partitions for Axiom A diffeomorphisms, \emph{Amer. J. Math}. 92, 725-747, (1970).
\bibitem{B2} ------------, Some systems with unique equilibrium states, \emph{Math. Systems Theory} 8, 3, 193-202,  (1974-75).
\bibitem{B3} ------------, \emph{Equilibrium states and the ergodic theory of Anosov diffeomorphisms},
reviseded, Lecture Notes in Mathematics, vol. 470, Springer-
Verlag, Berlin, (2008).
\bibitem{BS}
R. Bowen, C. Series,  Markov maps associated with Fuchsian groups, \emph{Inst. Hautes Etudes Sci. Publ. Math.}, 50, 153-170, (1979).
\bibitem{BK} H. Bruin and G. Keller, Equilibrium states for S-unimodal
maps, \emph{Ergod. Th. and Dyn. Syst}, 18, 65-789, (1998).
\bibitem{But}
A. Bufetov, Topological entropy of free semigroup actions and skew-product transformations, \emph{J. Dyn. Control Syst.}, 5(1), (1999),
137–143.
\bibitem{Bu}
J. Buzzi, Markov extensions for multidimensional dynamical systems, \emph{Israel J. Math}. 112,
357-380, (1999).
\bibitem{BS1}
J. Buzzi, O. Sarig, Uniqueness of equilibrium measures for countable Markov
shifts and multidimensional piecewise expanding maps, \emph{Ergod. Th. and Dyn. Syst.}, 23, 1383-1400, (2003).
%\bibitem{CRV}
%M. Carvalho, F.B. Rodrigues and P. Varandas, Semigroup action of expanding mas, \emph{J. Stat. Phys.}, 166, 114-136. (2017).
%\bibitem{CRV2}
%M. Carvalho, F.B. Rodrigues and P. Varandas, A variational principle for free semigroup actions, \emph{Advances in Mathematics}, 334, 450-487, (2018).
\bibitem{D}
Y. Daon, Bernoullicity of equilibrium measures on countable Markov shifts, \emph{Discrete Contin. Dyn. Syst.}, 33, 9, 4003-4015, (2013).
\bibitem{DKS}
M. Denker, Y. Kifer and M. Stadlbauer, Conservativity of random Markov fibred systems, \emph{Ergod. Th. and Dyn. Syst}, 28, 67–85, (2008).
\bibitem{EG}
L. C. Evans, R. F. Gariepy, Measure Theory and Fine Properties of Functions, Studies in Advanced Mathematics, Boca Raton, FL: CRC Press, pp. viii+268, (1992).
%\bibitem{GH}
%M. Gharaei and A.J. Homburg, Skew products of interval maps over subshifts, \emph{J. Difference Equ. Appl.}, 22(7), 941-958, (2017).
\bibitem{G1}
B. M. Gurevic, Topological entropy for denumerable Markov chains, \emph{Dokl. Acad. Nauk SSSR}, 187 (1969); English translation: \emph{Soviet Math. Dokl}. 10, 911-915, (1969).
\bibitem{G2}
B. M. Gurevic, Shift entropy and Markov measures in the path space of a
denumerable graph, \emph{Dokl. Acad. Nauk SSSR}, 192 (1970); English translation:
\emph{Soviet Math. Dokl}. 11, 744-747, (1970).
\bibitem{Ho}
F. Hofbauer, On intrinsic ergodicity of piecewise monotonic transformations with
positive entropy, I, \emph{Israel J. Math.}, 34 (1979), 213-237; II, 38, 107-115, (1981).
\bibitem{K}
S. Kakutani, Induced Measure Preserving Transformations, \emph{Proc. Imp. Acad.}, 19, 10, 635-641, (1943).
%\bibitem{KV}
%V. Kleptsyn, D. Volk, Physical measures for nonlinear random walks on interval,  \emph{Mosc. Math. J.}, 14, 339-365, (2014).
%\bibitem{LY}
%F. Ledrappier, L.-S. Young, Entropy formula for random transformations, \emph{Probab. Theory Related
%Fields}, 80, 217-240, (1988).
\bibitem{MW}
D. Ma, and M. Wu, Topological pressure and topological entropy of a semigroup of maps, {\emph Disc. Cont. Dyn. Syst.},
31(2), 545–557, (2011).
\bibitem{MS} N. Makarov and S. Smirnov, On thermodynamics of rational maps.
I. Negative spectrum, \emph{Comm. Math. Phys.}, 211, 3, 705-743, (2000).
\bibitem{Man}
R. Man\'{e}, Ergodic theory and differentiable dynamics, Springer Verlag, (1983).
%\bibitem{MU3}
%R. D. Mauldin and M. Urbanski, Gibbs states on the symbolic space over an infinite
%alphabet, \emph{Israel J. Math.}, 125, 3-130, (2001).
%\bibitem{MU4}
%D.Mauldin, M. Urbanski, Graph directed Markov systems,
%Geometry and dynamics of limit sets. Cambridge Tracts in Mathematics, 148.
\bibitem{O} K. Oliveira, Equilibrium states for non-uniformly expanding maps,
\emph{Ergod. Th. and Dyn. Syst.}, 23, 6, 1891-1905, (2003).
%\bibitem{PS0}
%Y. Pesin and S. Senti, Thermodynamical formalism associated with inducing schemes
%for one-dimensional maps (English, with English and Russian summaries), \emph{Mosc. Math. J.}, 5, 3, 669–678, 743–744, (2005).
\bibitem{PS} Y. Pesin and S. Senti, Equilibrium measures for maps with
inducing schemes, \emph{J. Mod. Dyn.} 2 , 3, 397-430, (2008).
\bibitem{PSZ2} Y. Pesin, S. Senti, and K. Zhang, Thermodynamics of towers of hyperbolic type, To appear in
\emph{Trans. Amer. Math. Soc}., (2016).
\bibitem{Ro}
V. A. Rokhlin, Exact endomorphism of a Lebesgue space, {\emph Izv. Akad. Nauk. SSSR Ser. Math}. 25
(1961), 499-530. \emph{Amer. Math. Soc. Trans.}, (2), 39, 1-36, (1964).
\bibitem{RV}
F. B. Rodrigues, and P. Varandas, Specification and thermodynamical properties of semigroup actions, \emph{J. Math. Phys.}, 57, 1-27, (2016).
\bibitem{R1} D. Ruelle, Statistical mechanics on a compact set with $Z^\nu$ action
satisfying expansiveness and specification, \emph{Bull. Amer. Math. Soc}. 78,
 988-991, (1972).
\bibitem{R2}D. Ruelle, Thermodynamic formalism, Encyclopedia of Mathematics and
its Applications, vol. 5, Addison-Wesley Publishing Co., Reading, Mass.,
1978, The mathematical structures of classical equilibrium statistical
mechanics, With a foreword by Giovanni Gallavotti and Gian-Carlo Rota.
\bibitem{Sa1}
O. Sarig, Thermodynamic formalism for countable Markov shifts, \emph{Ergod. Th. and Dyn. Syst.}, 19 , 1565-1593, (1999).
\bibitem{Sa2}
----------, Thermodynamic formalism for null recurrent potentials, \emph{Israel J. Math}. 121, 285-311, (2001).
\bibitem{Sa3} ----------, Phase transitions for countable Markov shifts, \emph{Comm. Math. Phys}. 217, 3, 555-577, (2001)
\bibitem{Sa4} ----------, Characterization of the existence of Gibbs measures for countable
Markov shifts, \emph{Proc of AMS}, 131:6 , 1751-1758, (2003).
\bibitem{Sa5} ----------, Thermodynamic formalism for countable Markov shifts, Hyperbolic
dynamics, fluctuations and large deviations, Proc. Sympos. Pure Math., vol. 89, Amer. Math. Soc., Providence, RI, 81-117, (2015).
\bibitem{S2}
J. G. Sinai, Markov partitions and U-diffeomorphisms, \emph{Akademija Nauk SSSR}, 2, (1), 64-89, (1968).
\bibitem{S3} G. Sinai, Gibbs measures in ergodic theory, \emph{Uspehi Mat. Nauk}, 27, 4, (166), 21-64, (1972).
\bibitem{Si}
Ja. G. Sinai, Gibbs measures in ergodic theory (Russian),\emph{ Uspehi Mat. Nauk}, 27, 4, (166), 21-64, (1972).
\bibitem{Ya}
L. S. Young, Recurrence times and rates of mixing, \emph{Israel J. Math.}, 110 (1999),
153-188.
%\bibitem{Y1}
%M. Yuri, Thermodynamic formalism for certain nonhyperbolic maps, \emph{Ergod. Th. and Dyn. Syst} 19 , 1365-1378, (1999).
\bibitem{Y}
M. Yuri, Thermodynamic formalism for countable to one  Markov systems, \emph{Trans. AMS}, (355), (7),  2949-2971, (2003).
\bibitem{Z}
R. Zweimuller, Invariant measures for general(ized) induced transformations, Proc.
Amer. Math. Soc. 133 , 8, 2283-2295, (2005).
\end{thebibliography}
\end{document}